\DeclareFontFamily{U}{solomos}{}
\DeclareFontShape{U}{solomos}{m}{n}{
  <-> s*[1.1]  gsolomos8r
}{}
\long\def\comment#1\endcomment{}
\theoremstyle{plain}
\newtheorem{theorem}{\sc Theorem}[section]
\newtheorem{lemma}[theorem]{\sc Lemma}
\newtheorem{conjecture}[theorem]{\sc Conjecture}
\newtheorem{prop}[theorem]{\sc Proposition}
\newtheorem{coroll}[theorem]{\sc Corollary}
\newcommand*{\doublerightarrow}[2]{\mathrel{
  \settowidth{\@tempdima}{$\scriptstyle#1$}
  \settowidth{\@tempdimb}{$\scriptstyle#2$}
  \ifdim\@tempdimb>\@tempdima \@tempdima=\@tempdimb\fi
  \mathop{\vcenter{
    \offinterlineskip\ialign{\hbox to\dimexpr\@tempdima+1em{##}\cr
    \rightarrowfill\cr\noalign{\kern.5ex}
    \rightarrowfill\cr}}}\limits^{\!#1}_{\!#2}}}
\theoremstyle{plain}
\theoremstyle{exercise}
\newtheorem{remark}[theorem]{\sc Remark}
\makeatletter \@addtoreset{equation}{section} \makeatother
\def\eqref#1{\thetag{\ref{#1}}}
\let\latexref=\ref
\def\ref#1{{\normalfont{\latexref{#1}}}}
\newcommand{\ldot}{{\:\raisebox{2,3pt}{\text{\circle*{1.5}}}}}
\newcommand{\udot}{{\:\raisebox{3pt}{\text{\circle*{1.5}}}}}
\def\dlim_#1{{\displaystyle\lim_{#1}}^\hdot}
\newcommand{\cchar}{\operatorname{\sf char}}
\newcommand{\Ker}{\operatorname{{\rm Ker}}}
\newcommand{\id}{{\mathrm{id}}}
\newcommand{\Mor}{\mathrm{Mor}}
\newcommand{\Ob}{\mathrm{Ob}}
\newcommand{\Hom}{\mathrm{Hom}}
\newcommand{\Hoch}{\mathrm{Hoch}}
\newcommand{\op}{\mathrm{op}}
\newcommand{\fin}{\mathrm{fin}}
\newcommand{\Mon}{{\mathscr{M}{on}}}
\newcommand{\dg}{\mathrm{dg}}
\newcommand{\Ho}{\mathrm{Ho}}
\newcommand{\Sets}{\mathscr{S}ets}
\newcommand{\Alg}{{\mathrm{Alg}}}
\newcommand{\Cat}{{\mathscr{C}at}}
\newcommand{\Top}{{\mathscr{T}op}}
\renewcommand{\k}{\Bbbk}
\newcommand{\Assoc}{\mathbf{Assoc}}
\renewcommand{\cchar}{\mathrm{char}\ }
\newcommand{\coh}{\mathrm{coh}}
\newcommand{\sotimes}{{\overset{\sim}{\otimes}}}
\newcommand{\Sym}{\mathrm{Sym}}
\newcommand{\Br}{\mathbf{Br}}
\newcommand{\SC}{\mathrm{SC}}
\newcommand{\scc}{\mathbf{SC}}
\newcommand{\e}{\mathbf{E}}
\newcommand{\Eq}{\mathrm{Eq}}
\newcommand{\Act}{\mathrm{Act}}
\newcommand{\comm}{\mathrm{comm}}
\newcommand{\sO}{{\mathbf{s}\O}}
\newcommand{\norm}{\mathrm{norm}}
\newcommand{\sevafigc}[4]{\begin{figure}[h]\centerline{
 \epsfig{file=#1,width=#2,angle=#3}}
\bigskip\caption{#4}\end{figure}}
\title{\sc{Twisted tensor product of dg categories and Kontsevich's Swiss Cheese conjecture}}
\author{\sc{Michael Batanin and Boris Shoikhet}}
\date{}
\begin{document}\maketitle
{\footnotesize
\begin{center}{\parbox{4,5in}{{\sc Abstract.}
Let $A$ be a $1$-algebra. The Kontsevich Swiss Cheese conjecture [K2] states that the homotopy category $\Ho(\Act(A))$ of actions of  $2$-algebras on $A$ has a final object and that this object is weakly equivalent to the pair $(\Hoch^\udot(A), A),$ where $\Hoch^\udot(A)$ is the Hochschild complex of $A$. Here the category of actions is the category whose objects are pairs $(B,A)$ which are algebras of the chain Swiss Cheese operad such that the induced action of the little interval operad on the component $A$ coincides with the $1$-structure on $A$.

We prove that there is a colored dg operad $\mathcal{O}$ with 2 colors, weakly equivalent to the chain Swiss Cheese operad  
for which the following ``stricter" version of the Kontsevich Swiss Cheese conjecture holds.  
Denote the two colors of $\mathcal{O}$ by $a$ (for the 1-algebra argument) and $b$ (for the 2-algebra argument), denote by $E_1^{\mathcal{O}}$ the restriction of $\mathcal{O}$ to the color $a$, and by $E_2^{\mathcal{O}}$ the restriction of $\mathcal{O}$ to the color $b$. Let $\Alg(\mathcal{O})$ be the category of dg algebras over $\mathcal{O}$.
For a fixed $1$-algebra $A$ we also have the the category of action $\Alg(\mathcal{O})_A$ (equal to $\Act(A)$ in case of the Swiss Cheese operad). We prove that there is an equivalence of categories
$$\Alg(\mathcal{O})_A \cong \Alg(E_2^{\mathcal{O}})/\Hoch^\udot(A)$$ 
We stress that for this particular model of Swiss Cheese operad the statement holds on the chain level, {\it without passage to the homotopy category}.

}}
\end{center}
}

\section{\sc Introduction}
Throughout the paper, $\k$ is a field of any characteristic. The category of complexes of $\k$-vector spaces is denoted by $C^\udot(\k)$. For a colored operad $\mathcal{O}$ with colors $a,b,c,\dots,z$ we denote by $\mathcal{O}((a^{m_1}, b^{m_2},c^{m_3},\dots, e^{m_k}; z)$ the components of the arity $m_1+\dots+m_k$ whose first $m_1$ arguments have color $a$, second $m_2$ arguments have color $b$, and so on, and the output color is $z$. Recall that the latter object is endowed with an action of the product of symmetric groups $\Sigma_{m_1}\times\dots\times \Sigma_{m_k}$. 

\subsection{\sc }
We denote by $E_n$ the topological little disks operad in dimension n. 
Let $A,B\in \Top$ such that $B$ is an $E_2$-algebra and $A$ is an $E_1$-algebra. The topological Swiss Cheese operad (introduced by A. Voronov in [V]) makes it possible to define an {\it action} of $B$ on $A$ (in the sense of these structures). 

The topological operad $\SC_1$ is defined as a colored symmetric operad in topological spaces with 2 colors $a$ and $b$, with the allowed operations of type $\SC_1(b^n;b)=E_2(n)$, $n\ge 0$ (this suboperad is thus the little disks operad $E_2$), and the operations of type $\SC_1(b^n, a^m; a)$, $m, n\ge 0$ (which in particular case $\SC_1(b^0, a^m,a)=E_1(m)$ gives the operad $E_1$). The topological space $\SC_1(b^n,a^m,a)$ is the configuration space of non-intersecting disks and half-discs inside a unit half-disc, see Figure \ref{fig1}. The disks are labelled from 1 to $n$, and the half-disks are labelled from 1 to $m$, the symmetric groups acts by changing the labelling. We assume that $\SC_1(b^0,a^0;a)=*$, $\SC_1(b^0,a^1;a)=*$ (the radius 1 upper half-disk equal to the outer half-disc is the only allowed configuration). Note that $\SC_1(b^0,a^0;a)$ is a contractible topological space. The compositions in the colored operad $\SC_1$ are clear.
Such an operad has a clear counterpart in any dimension. It was introduced and studied by A.Voronov [V].

\sevafigc{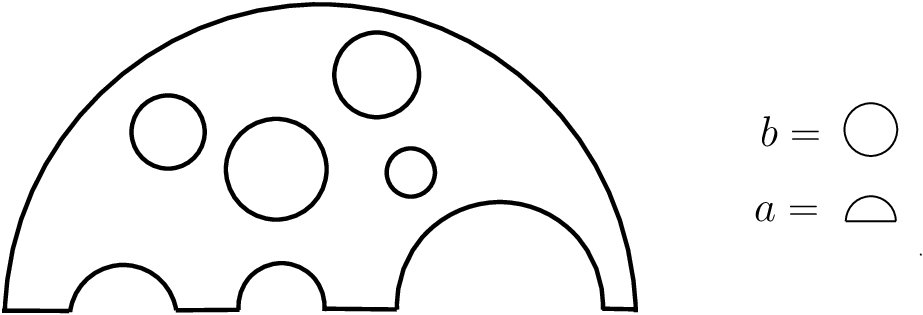}{80mm}{0}{The Swiss Cheese operad component $\SC_1(b^n, a^m; a)$. \label{fig1}}

One has natural operad embeddings $i\colon E_1\to \SC_1$ and $j\colon E_2\to\SC_1$ which send the only color of the source operad to the color $a$ and the color $b$, correspondingly. 

An algebra over $\SC_1$ is a pair $A,B$ of topological spaces (so that $A$ corresponds to the color $a$ and $B$ corresponds to the color $b$) equipped with maps 
$$
\SC_1(b^n;b)\times B^{\times n}\to B,\ \ \SC_1(b^n,a^m;a)\times B^{\times n}\times A^{\times m}\to A
$$
which are subject to the natural compatibility with the operadic compositions, and the equivariance with respect to the symmetric group actions. A structure of an algebra $(B,A)$ over $\SC_1$ results to an $E_2$-algebra structure on $B$, an $E_1$-algebra structure on $A$, {\it and} some action of $B$  on $A$. The latter action is what the Swiss Cheese operad is used for. A dimension 0 counterpart is untuitively more clear: an action of a monoid $M$ on a set $X$. The Swiss Cheese operad provides a concept of such action in any dimension, so that one can speak on an $E_{n+1}$-algebra acting on $E_n$-algebra. 

In what follows we consider the chain versions of the topological operads. The functor of normalised cubical chains from $\Top$ to $C^\udot(\k)$ has a natural lax-monoidal structure. Therefore, for any topological operad $\mathcal{O}$ the normalised cubical chains $C_\ldot(\mathcal{O}(=);\k)$ forms an operad in $C^\udot(\k)$. 
We use the notation $\e_n=C^\udot(E_n;\k)$.

\subsection{\sc}
Denote by $\scc_1$ the chain version of the Swiss Cheese operad in dimension 1.
We denote by $(B,A)$ an algebra over the chain operad $\scc_1$. It means that $B,A\in C^\udot(\k)$, and there are maps
$$
\scc_1(b^n;b)\otimes B^{\otimes n}\to B,\ \ \scc_1(b^n,a^m;a)\otimes B^{\otimes n}\otimes A^{\otimes m}\to A
$$
which are subject to the natural identities. 
%(In what follows we often abbreviate $\scc_1$ as $\scc$, as we deal with dimension 1 only in this paper). 

In particular, $B$ becomes a $\e_2$-algebra, $A$ a $\e_1$-algebra, and the fact that $(B,A)$ such above form an algebra over the dg operad $\scc_1$ can be informally interpreted that the $\e_1$-algebra $A$ is a ``module'' over a $\e_2$-algebra $B$. 

The Deligne conjecture (which by now admits several proofs [MS], [KS], [T3], also [B1] and [T1] together) says that the cohomological Hochschild complex $\Hoch^\udot(A)$ of a $\e_1$-algebra $A$ has a $\e_2$-algebra structure. 

The Kontsevich Swiss Cheese conjecture [K2] (see below) implies that there is a canonical (up to a homotopy) solution to the Deligne conjecture, for any $A$. It conjectured that the pair $(\Hoch^\udot(A),A)$ is an algebra over $\scc_1$, and gives a universal property for it. The fact the the pair $(\Hoch^\udot(A),A)$ is a $\scc_1$-algebra was proven in [DTT] some 10 years later. (The proof is essentially non-trivial and is relied on the theory of colored 2-operads [B1] and some results of loc.cit. on the derived symmetrisation of contractible colored 2-operad of Swiss Cheese type). 

Fix an associative algebra $A$ (which we consider as an $\e_1$-algebra under the canonical quasi-isomorphism $p\colon \e_1\to\Assoc$). 
Define a category $\Act(A)$ as follows.

An object of $\Act(A)$ is an $\e_2$-algebra $B$ such that $(B,A)$ form an algebra over the chain operad $\scc_1$. A morphism $(B,A)\to (B^\prime,A)$ is a morphism $\phi\colon B\to B^\prime$ of $\e_2$-algebras such that $(\id_A,\phi)\colon (B,A)\to (B^\prime,A)$ is a map of algebras over $\scc_1$. Such a morphism is called a quasi-isomorphism if $\phi$ is a quasi-isomorphism of complexes. The (Gabriel-Zisman) localisation of the category $\Act(A)$ by quasi-isomorphisms is denoted by $\Ho(\Act(A))$. We call it the homotopy category of $\Act(A)$.

The {\it Kontsevich Swiss Cheese conjecture} is the following statement:

For any associative dg algebra $A$, the homotopy category $\Ho(\Act(A))$ has a final object $(B_\fin,A)$. Moreover, $B_\fin$ is quasi-isomorphic, as a complex, to $\Hoch^\udot(A)$.

Thus, the conjecture suggested that $(\Hoch^\udot(A),A)$ is an algebra over $\scc_1$ (which itself is a stronger statement than the Deligne conjecture, it was proven in [DTT]), and that for any other algebra $(B,A)$ over $\scc_1$ there is a unique, up to a weak equivalence, map of $\e_2$-algebras $\phi\colon B\to \Hoch^\udot(A)$ such that the $B$-module structure on $A$ is the restriction along $\phi$ of the $\Hoch^\udot(A)$-module structure on $A$. Note that a solution to this conjecture provides a {\it canonical} up to a weak equivalence solution to the Deligne conjecture.

A version of the Kontsevich Swiss Cheese conjecture (slightly weaker then the original one)  is proven in [Th] for topological operads, for any dimension.  The proof is very involved, though, and does not seem to be applicable directly to the chain operads  which motivated us for finding a more direct and conceptual approach.  

\comment
Our main result in this paper proves yet another version of the Kontsevich conjecture for dg-enriched case which is still  slightly weaker than the original one, but does provide a unique up to a weak equivalence solution to the Deligne conjecture (in dimension $2$) and still characterises Hochschild complex by a universal property (which is the main result of [Th] in topological case). Also, our approach works for any operad weakly equivalent to $\scc_1$.  

Let us stress the following general technical difficulty when dealing with the homotopy category of $\Act(A)$. 

Recall (see Section \ref{section4} for more detail) that under some rather mild assumptions, the category of dg algebras over a (colored) dg operad $\mathcal{O}$ has a projective model (or, at least, semimodel) structure. For $\cchar \k=0$ it is always the case, and for general $\k$ one has to require that the operad in $\Sigma$-cofibrant that is, the products of the symmetric groups act freely on the (colored) arity components (see [Hi],[BM1,2],[BB] for references). 

Consequently, when $A$ is {\it variable}, the category $\Alg(\scc_1)$ %whose objects are pairs $(B,A)$ as above, and whose morphisms are pairs $(\phi,f)$ where $\phi\colon B\to B^\prime$ and $f\colon A\to A^\prime$ such that $(\phi,f)$ of algebras over $\scc_1$ 
has a projective model structure. However, 
the category $\Act(A)$ is {\it not} the category of algebras over an operad, and it does not inherit a closed model structure in any natural way.  One can treat it as the {\it set-theoretical fibre} 
$p^{-1}(A)$ for the forgetful functor $p\colon \Alg(\scc_1)\to \Alg(\e_1)$.  This functor is not a Grothendieck fibration or opfibration and one can not expect, in general, an existence of the induced  model structure on this set-theoretical fibre. We even can not guarantee that this fiber has a terminal object (which is a necessary requirement for the existence of model structure).  On the other hand, a model structure exists for the {\it homotopy fibre}: for a closed model category $\mathcal{M}$ and $X\in \mathcal{M}$ an object, the comma-categories $\mathcal{M}\backslash X$ and $X\backslash\mathcal{M}$ are naturally closed model categories, in which a morphism is a weak equivalence (resp., a cofibration, resp., a fibration) iff the corresponding morphism in $\mathcal{M}$ is [Hir] Sect. 7.6.4, [Hov] Sect. 1.1. A natural strategy to prove the original form of the Swiss-Cheese conjecture would be to establish a Quillen equivalence between two model categories $\Act(A)$ and $\Alg(\e_2)/\Hoch^\udot(A)$ but the  absence of an induced model structure on the set-theoretical fibre $\Act(A)$ constitutes a difficulty in solving the Swiss Cheese conjecture. 
\endcomment

\comment
\subsection{}
Here we state the main result of this paper. 

We replace the category $\Ho(\Act(A))$ by a category more flexible from the homotopy theory point of view. Call a morphism $(\phi,f)\colon (B,A)\to (B^\prime, A^\prime)$ in $\Alg(\scc)$ a quasi-isomorphism if both $\phi$ and $f$ are quasi-isomorphisms of complexes. The localisation of $\Alg(\scc)$ by quasi-isomorphisms is denoted by $\Ho(\Alg(\scc_1))$. One has a functor $p_\Ho\colon \Ho(\Alg(\scc))\to\Ho(\Alg(\e_1))$. Take any associative dg algebra $A$ and consider the set-theoretical fibre \\ $\Ho(\Alg(\scc))_A:=p_\Ho^{-1}(A)$.

\begin{theorem}\label{theoremintro}
For any associative dg algebra $A$ over a field $\k$ of any characteristic the category $\Ho(\Alg(\scc))_A$ has a final object. The  $\e_2$-part of it is weakly equivalent to the Hochschild complex $\Hoch^\udot(A)$ as a complex, and thus provides a canonical up to a weak equivalence $\e_2$-algebra structure on it. The same statement holds for any $\Sigma$-cofibrant colored dg operad with 2 colors, weakly equivalent to $\scc_1$. In case $\cchar\k=0$ the condition of $\Sigma$-cofibrancy of the operad can be dropped. 
\end{theorem}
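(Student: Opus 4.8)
The plan is to sidestep the set-theoretic fibre $\Act(A)$, which carries no natural model structure, by replacing $\scc_1$ with a conveniently chosen $\Sigma$-cofibrant model $\mathcal{O}\simeq\scc_1$ for which the action part of an $\mathcal{O}$-algebra over a fixed $A$ \emph{rigidifies} into a strict $E_2$-morphism to the Hochschild complex. Concretely, I would first establish, at the chain level and with no passage to homotopy categories, the isomorphism of categories advertised in the abstract,
$$
\Alg(\mathcal{O})_A\;\cong\;\Alg(E_2^{\mathcal{O}})/\Hoch^\udot(A),
$$
and only afterwards transport its consequences to $\Ho(\Alg(\scc))_A$ through the weak equivalence $\mathcal{O}\simeq\scc_1$.

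The construction of $\mathcal{O}$ and the proof of this isomorphism form the technical core, where the twisted tensor product of the title enters. Fixing the colour-$a$ argument to be $A$ and applying the tensor--hom adjunction converts the mixed structure maps $\mathcal{O}(b^n,a^m;a)\otimes B^{\otimes n}\otimes A^{\otimes m}\to A$ into a morphism out of $B$ into the endomorphism $E_2^{\mathcal{O}}$-algebra $\mathcal{E}_A$ of $A$, whose underlying complex is
$$
\mathcal{E}_A \;=\; \prod_{m\ge 0}\Hom\bigl(\mathcal{O}(-,a^m;a)\otimes A^{\otimes m},A\bigr)^{\Sigma_m}
$$
and whose $E_2^{\mathcal{O}}$-structure is induced by the operadic compositions of $\mathcal{O}$. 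I would choose $\mathcal{O}$ so that simultaneously (i) the projection $\mathcal{O}\to\scc_1$ is a weak equivalence and $\mathcal{O}$ is $\Sigma$-cofibrant, and (ii) $\mathcal{E}_A$ is \emph{isomorphic} --- not merely quasi-isomorphic --- to $\Hoch^\udot(A)$ with its Deligne $E_2$-structure (the one realizing $(\Hoch^\udot(A),A)$ as a $\scc_1$-algebra, cf. [DTT]). Granting (ii), an $\mathcal{O}$-algebra structure on $(B,A)$ with $A$ fixed is exactly an $E_2^{\mathcal{O}}$-algebra $B$ equipped with a strict $E_2^{\mathcal{O}}$-map $B\to\Hoch^\udot(A)$, i.e. an object of the comma category; the same bookkeeping on morphisms yields the displayed isomorphism.

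With the chain-level isomorphism in hand the homotopy-theoretic conclusion is essentially formal. Since $E_2^{\mathcal{O}}$ is $\Sigma$-cofibrant, $\Alg(E_2^{\mathcal{O}})$ carries a (semi)model structure, hence so does the comma category $\Alg(E_2^{\mathcal{O}})/\Hoch^\udot(A)$, whose strictly terminal object $\id\colon\Hoch^\udot(A)\to\Hoch^\udot(A)$ remains terminal in its homotopy category. The weak equivalence $\mathcal{O}\simeq\scc_1$ induces an equivalence $\Ho(\Alg(\mathcal{O}))\simeq\Ho(\Alg(\scc_1))$ compatible with forgetting the colour-$b$ part, so, $A$ being fixed, it identifies the fibres $\Ho(\Alg(\mathcal{O}))_A\simeq\Ho(\Alg(\scc))_A$. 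Transporting the terminal object of the comma category through the chain-level isomorphism then exhibits $(\Hoch^\udot(A),A)$ as a final object of $\Ho(\Alg(\scc))_A$: for any $(B,A)$ in the fibre the unique morphism to it is, on the chain level, the structure map $B\to\Hoch^\udot(A)$ itself, whence homotopy-unique. Its $E_2^{\mathcal{O}}$-part is $\Hoch^\udot(A)$, and transporting along the weak equivalence $E_2^{\mathcal{O}}\simeq\e_2$ furnishes the asserted canonical-up-to-weak-equivalence $\e_2$-structure. The identical argument holds with $\scc_1$ replaced by any $\Sigma$-cofibrant two-coloured operad weakly equivalent to it, and in characteristic $0$ the transferred model structures exist without $\Sigma$-cofibrancy (averaging over the symmetric groups splits their actions), so that hypothesis may be dropped.

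The main obstacle is step (ii): producing a \emph{single} $\Sigma$-cofibrant resolution $\mathcal{O}\to\scc_1$ for which the endomorphism $E_2$-algebra $\mathcal{E}_A$ is \emph{strictly} the Hochschild complex. For $\scc_1$ itself this fails, and the three demands --- weak equivalence to $\scc_1$, $\Sigma$-cofibrancy, and strict decoupling --- are in tension; reconciling them is precisely what the twisted tensor product construction is designed to achieve. The delicate verification is that this construction imposes no coherence on the $A$-action beyond a bare $E_2$-map to $\Hoch^\udot(A)$, i.e. that the rigidification in (ii) is an honest isomorphism of $E_2^{\mathcal{O}}$-algebras and not merely a weak equivalence, since any surplus rigidity would obstruct the chain-level splitting of the fibre into a comma category.
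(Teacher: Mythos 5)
Your two-step skeleton --- first a strict chain-level decoupling $\Alg(\mathcal{O})_A\cong\Alg(E_2^{\mathcal{O}})/\Hoch^\udot(A)$ for one well-chosen model of $\scc_1$, then transfer along the weak equivalence --- is the same program the paper pursues, and your step (ii) is essentially the paper's Theorem \ref{theorem1}: the model is $\mathcal{O}_0$, whose $b$-color part is the brace operad $\Br$, and the decoupling is obtained not from an ad hoc endomorphism object $\mathcal{E}_A$ but from the closedness of the skew-monoidal structure $\sotimes$ on $\Cat_\dg(\k)$, i.e.\ the adjunction $\Hom(C\sotimes D,E)\simeq\Hom(C,[D,E])$ together with Lemma \ref{lemmaintro1} and Corollary \ref{corrr}. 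Be aware that for a general two-colored operad of Swiss-Cheese type your $\mathcal{E}_A$ does not even carry an $E_2^{\mathcal{O}}$-algebra structure: composing along $b$-inputs requires every mixed operation with $n$ inputs of color $b$ to decompose into operations with a single $b$-input, which already fails for $\scc_1$ itself; the existence of such an internal-hom object is exactly the special feature of $\mathcal{O}_0$, not something one can postulate and fix later.

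The genuine gap is the transfer step, which you declare ``essentially formal.'' The statement concerns the \emph{set-theoretical fibre} $\Ho(\Alg(\scc))_A=p_{\Ho}^{-1}(A)$: objects whose $a$-part is literally $A$ and morphisms which are literally $\id_A$ on that part. A weak equivalence of $\Sigma$-cofibrant operads does give an equivalence $\Ho(\Alg(\mathcal{O}))\simeq\Ho(\Alg(\scc_1))$, but it is realized by derived induction/restriction: the induction functor involves a cofibrant replacement and preserves the underlying $\e_1$-algebra only up to quasi-isomorphism, so it does not carry the fibre over $A$ into the fibre over $A$; the restriction functor does preserve fibres, but there is no formal reason it induces an equivalence on them --- the fibre is not the category of algebras over any operad, the forgetful functor $\Alg(\scc_1)\to\Alg(\e_1)$ is not a Grothendieck (op)fibration, and the fibre inherits no model structure, so none of the standard machinery applies. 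You also conflate the localisation of the fibre with the fibre of the localisation: your comma-category argument controls at best $\Ho(\Alg(\mathcal{O})_A)$, whereas $\Ho(\Alg(\scc))_A$ has morphisms given by zigzags that may pass through objects outside the fibre. This is not a pedantic distinction: the paper proves only the chain-level statement for $\mathcal{O}_0$ (which, moreover, is not shown to be $\Sigma$-cofibrant, so your demands (i) and (ii) are never met simultaneously by any exhibited model) and explicitly defers the homotopy-theoretic transfer to a sequel, precisely because no homotopy-theoretical means for handling set-theoretical fibres are known to the authors. So the step you treat as bookkeeping is the open part of the problem, and the part you flag as ``the main obstacle'' is the part the paper actually solves.
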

\endcomment

%Recall (see Section 4) that under the same assumptions which we made for existence of transferred closed model structure on the category of dg algebras over an operad (namely, no assumptions for the case $\cchar \k=0$, and $\Sigma$-cofibrancy of the operad for general $\k$) it is true that a quasi-isomorphism $f\colon \mathcal{O}_1\to \mathcal{O}_2$ defines a Quillen equivalence between the model categories $\Alg(\mathcal{O}_1)\rightleftarrows \Alg(\mathcal{O}_2)$, with right adjoint the restriction. 

\comment
Recall that a (colored) operad in a monoidal model category $\mathcal{M}$ is called $\Sigma$-cofibrant [BM1], Sect. 4, if the (product of the) symmetric groups act freely on its arity components. If the monoidal model category admits a {\it commutative Hopf interval} [BM1], Sect. 3, the result of [BM2] Th. 2.1 says the category of algebras over a $\Sigma$-cofibrant (colored) operad has a closed model structure, and if $\mathcal{M}$ is moreover left proper, a weak equivalence of colored $\Sigma$-cofibrant operads leads to a Quillen equivalence of the categories of algebras over them [BM2], Th.4.1. For $\mathcal{M}=C^\udot(\k)$ and $\cchar \k=0$, the same is true with $\Sigma$-cofibrancy condition dropped [Hi], Th. 4.1.1.
\endcomment

\subsection{\sc}
Within our approach, proposed in this paper, a proof of the Swiss Cheese conjecture is divided into two steps. As the first step, we find a particular perfect model $\mathcal{O}_0$ of the chain Swiss Cheese $\scc_1$ for which the Swiss Cheese conjecture holds before passing to the localisation by quasi-isomorphisms. (It is an interesting question to which extent existence of such a perfect model is a general phenomenon, see Concluding remarks at the end of this Introduction for a further discussion). 

Our main result is:
\begin{theorem}\label{theorem1}
Let $\k$ be a field of any characteristic, $A$ a dg algebra over $\k$. There is a colored operad with 2 colors in $C^\udot(\k)$ weakly equivalent to the chain Swiss Cheese operad $\scc_1$ for which the following statement holds. Consider the category $\Alg(\mathcal{O}_0)_A$ of pairs $(B,A)\in \Alg(\mathcal{O}_0)$ whose second component is $A$, the morphisms are identity of $A$ when restricted to the second component. Then there is an equivalence of categories 
$$
\Alg(\mathcal{O}_0)_A\simeq \Alg(E_2^{\mathcal{O}_0})/\Hoch^\udot(A)
$$
In particular, the homotopy category $\Ho(\Alg(\mathcal{O}_0)_A)$ has a final object, given by $(\Hoch^\udot(A),A)$.
\end{theorem}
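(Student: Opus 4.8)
\emph{Strategy.} The plan is to reduce the equivalence to a single structural property of the model $\mathcal{O}_0$: once the $\e_1$-algebra $A$ is fixed, the mixed (action) operations of $\mathcal{O}_0$ should be \emph{co-represented} by $\Hoch^\udot(A)$, so that promoting $A$ to an $\mathcal{O}_0$-algebra $(B,A)$ is literally the same datum as a morphism $B\to\Hoch^\udot(A)$ in $\Alg(E_2^{\mathcal{O}_0})$. Concretely, I would build $\mathcal{O}_0$ as the twisted tensor product advertised in the title, so that its color-$b$ part $E_2^{\mathcal{O}_0}$ is a convenient (brace- or surjection-type) model of $\e_2$, while the mixed components $\mathcal{O}_0(b^n,a^m;a)$ are arranged so that (i) the unary part satisfies $\prod_m\Hom_{\Sigma_m}(\mathcal{O}_0(b^1,a^m;a)\otimes A^{\otimes m},A)\cong\Hoch^\udot(A)$ as a complex, and (ii) for $n\ge 2$ every mixed operation is an operadic composite of unary mixed operations with the color-$b$ operations of $E_2^{\mathcal{O}_0}$ --- the combinatorial shadow of the brace/cup-product action of $\Hoch^\udot(A)$ on $A$. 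Weak equivalence $\mathcal{O}_0\simeq\scc_1$ is then checked color-wise: the $b$-part by the chosen $\e_2$-model, and the mixed part by comparison with the Swiss Cheese configuration spaces.

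\emph{The two functors.} With $\mathcal{O}_0$ constructed, I would first exhibit the tautological $\mathcal{O}_0$-algebra $(\Hoch^\udot(A),A)$, whose action operations are the evaluations $\phi\otimes a_1\otimes\cdots\otimes a_m\mapsto\phi(a_1,\dots,a_m)$ organized through the brace structure; that this is an honest $\mathcal{O}_0$-algebra is exactly the statement that $(\Hoch^\udot(A),A)$ is a Swiss Cheese algebra, transported to the model $\mathcal{O}_0$. The equivalence is then given by mutually inverse functors. In one direction, an $\mathcal{O}_0$-algebra $(B,A)$ yields its classifying map $f_B\colon B\to\Hoch^\udot(A)$ by adjunction from the unary action $\mathcal{O}_0(b^1,a^m;a)\otimes B\otimes A^{\otimes m}\to A$; the operadic identities translate precisely into $f_B$ being a morphism of $E_2^{\mathcal{O}_0}$-algebras, so $(B,f_B)$ is an object of $\Alg(E_2^{\mathcal{O}_0})/\Hoch^\udot(A)$. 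In the other direction, a morphism $g\colon B\to\Hoch^\udot(A)$ is pulled back along the tautological action to equip $(B,A)$ with an $\mathcal{O}_0$-algebra structure. Functoriality is immediate, and on morphisms the two assignments are inverse because a map $(B,A)\to(B',A)$ is by definition the identity on $A$, hence determined by its $B$-component.

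\emph{The main obstacle.} The crux is Step one: realizing this co-representability \emph{strictly}, on the chain level, while keeping $\mathcal{O}_0$ weakly equivalent to $\scc_1$. The standard chain Swiss Cheese operad fails this, since its mixed operations carry genuine higher data not reconstructible from the unary part; the point of the twisted-tensor-product model is exactly to make the higher mixed operations free over the unary ones. The delicate verification is that this freeness is compatible simultaneously with operadic composition in the two colors and with the differential, so that the adjunction producing $f_B$ loses and adds no information beyond $f_B$ itself. Once this compatibility is in place, both round-trips $(B,A)\mapsto(B,f_B)\mapsto(B,A)$ and $(B,g)\mapsto(B,A)\mapsto(B,f_B)$ with $f_B=g$ hold on the nose, giving the equivalence of categories of the theorem, in fact realized as an isomorphism onto the comma category.

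\emph{The final object.} The comma category $\Alg(E_2^{\mathcal{O}_0})/\Hoch^\udot(A)$ has the evident strict terminal object $(\Hoch^\udot(A),\id)$, which under the equivalence corresponds to the tautological algebra $(\Hoch^\udot(A),A)$. Since $\Alg(E_2^{\mathcal{O}_0})$ carries a (semi)model structure, the slice over $\Hoch^\udot(A)$ inherits one with the same terminal object, so this object remains final after localizing at quasi-isomorphisms; this yields the last assertion of the theorem.
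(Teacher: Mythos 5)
Your overall architecture coincides with the paper's: fix $A$, co-represent actions on $A$ by $\Hoch^\udot(A)$ through an adjunction, and identify $\Alg(\mathcal{O}_0)_A$ with the slice category on the nose, before any localization. The genuine gap is that the step you yourself flag as the crux --- realizing the co-representability strictly on the chain level, with your conditions (i), (ii) compatible with both operadic compositions and the differential --- is precisely the mathematical content of the theorem, and your proposal postulates it rather than proves it: you never construct $\mathcal{O}_0$, and ``the point of the twisted-tensor-product model is exactly to make the higher mixed operations free over the unary ones'' is an assertion, not an argument. What fills this hole in the paper is a specific mechanism: $\mathcal{O}_0$-algebras are \emph{defined} to be pairs of dg algebras $(B,A)$ with $B$ a monoid for the twisted tensor product $\sotimes$ on $\Cat_\dg(\k)$ acting on $A$, where $\sotimes$ is part of a \emph{closed skew-monoidal} structure $\Hom(C\sotimes D,E)\simeq\Hom(C,[D,E])$ whose internal Hom satisfies $[A,A](\id,\id)=\Hoch^\udot(A)$. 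Then two nontrivial facts do the work your ``delicate verification'' would have to do: Lemma \ref{lemmaintro1} --- a dg algebra is a $\sotimes$-monoid (sharing the dg unit) iff it is a brace algebra, proved by matching the associator formula \eqref{eqassoc4bis} against the brace relation \eqref{eqbrace3} --- which is what identifies $E_2^{\mathcal{O}_0}$ with $\Br$ and makes ``$f_B$ is a map of $E_2^{\mathcal{O}_0}$-algebras'' an automatic consequence of the adjunction; and Corollary \ref{corrr} --- under the adjunction, module structures $B\sotimes A\to A$ correspond exactly to $\sotimes$-monoid maps $B\to[A,A](\id,\id)$ --- which requires care precisely because the associator is one-sided and non-invertible (the monoid structure on $[A,A]$ itself uses $\alpha$, Remark \ref{remskewmon}). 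Without these, or some equivalent, your classifying map $f_B$, the inverse functor, and the round-trip identities remain unverified claims.

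A second, smaller gap: the weak equivalence $\mathcal{O}_0\simeq\scc_1$ is not something one ``checks color-wise'' by hand against configuration spaces. In the paper it is obtained by identifying $\mathcal{O}_0$ with the normalized dg condensation of the simplicial Swiss Cheese operad of natural operations and citing [DTT, Th.~2.1] (with [BBM] handling the color-$b$ part); even the statement that $\Br$ is weakly equivalent to $\e_2$ is itself a theorem, not a check. By contrast, your final paragraph on the terminal object (slice (semi)model structure, terminal objects survive passage to the homotopy category) is sound, and is in fact more justification than the paper offers for that last sentence.
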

% In a sequel paper, we show how a statement about homotopy categories  of action  very close to the original Swiss Cheese conjecture  can be deduced for other weakly equivalent models of $\scc_1$ from
%Theorem \ref{theorem1}.
 
In fact, this operad $\mathcal{O}_0$ is well-known: it is the {\it reduced} $\k$-linear condensation of the simplicial version of Swiss Cheese operad of natural operations introduced and studied in [DTT]. However, the fact that $\Alg(\mathcal{O}_0)_A$ has a final object is not clear from the description of $\mathcal{O}_0$ in loc.cit., this statement is new. 

The second step, which we consider in our next paper, is a deduction of Swiss Cheese like statement for any $\Sigma$-cofibrant reduced chain operad weakly equivalent to $\scc_1$ from the statement of Theorem\ref{theorem1}. Here the idea is to provide a homotopy-theoretical version of the statement of Theorem \ref{theorem1}, making it possible to replace $\mathcal{O}_0$ by another weakly equivalent $\Sigma$-cofibrant operad in $C^\udot(\k)$. Note that the fact that $\mathcal{O}_0$ and $\scc_1$ are weakly equivalent is proven in [DTT, Th. 2.1]. 

\subsection{\sc }
We give an alternative description of the same colored dg operad $\mathcal{O}_0$, relied on the skew-monoidal structure $\sotimes$ on the category $\Cat_\dg(\k)$ of small dg categories over $\k$ recently introduced in [Sh1]. (This skew-monoidal structure was called {\it twisted tensor product} in loc.cit., which term is a bit confusing; a better name would be {\it Gray-like tensor product}). We outline its construction and properties in Section \ref{sectiontw}. 

Then an algebra over $\mathcal{O}_0$ can be interpreted as a pair $(B,A)$, where $A,B$ are dg algebras over $\k$, $B$ is moreover a {\it monoid} in $(\Cat_\dg(\k), \sotimes)$ acting on $A\in \Cat_\dg(\k)$. It means precisely the same as an action of an ordinary monoid in $\Sets$ on a set, the only difference is that the monoidal-like structure defined by $\sotimes$ is only skew-monoidal [S] (meaning existence of a one-side associator, whose map is neither an isomorphism not a quasi-equivalence, satisfying the usual pentagon and unit axioms). So it means that there are maps of dg algebras
\begin{equation}\label{skewmonoid}
m\colon B\sotimes B\to B,\ \ \ell\colon B\sotimes A\to A
\end{equation}
such that 
\begin{itemize}
\item[(i)] $m$ is associative, in the sense that the diagram
\begin{equation}\label{skewmonoid1}
\xymatrix{
(B\sotimes B)\sotimes B\ar[d]_{m\sotimes\id}\ar[rr]^{\alpha}&&B\sotimes(B\sotimes B)\ar[d]^{\id\sotimes m}\\
B\sotimes B\ar[r]^m&B&B\sotimes B\ar[l]_m
}
\end{equation}
commutes,
\item[(ii)] $\ell$ defines a module structure, in the sense that the diagram commutes:
\begin{equation}\label{skewmonoid2}
\xymatrix{
(B\sotimes B)\sotimes A\ar[rr]^{\alpha}\ar[d]_{m\sotimes\id}&&B\sotimes (B\sotimes A)\ar[d]^{\id\sotimes\ell}\\
B\sotimes A\ar[r]^\ell&A&B\sotimes A\ar[l]_\ell
}
\end{equation}
\item[(iii)] $m(1_B\sotimes b)=m(b\sotimes 1_B)=b$,   $\ell(1_B\sotimes a)=a$ for any $a\in A$, $b\in B$ (the unit maps for $\sotimes$ are implicitly used here).
\end{itemize}
We stress that (iii) means that the units for both monoid structures (with respect to two different monoidal structures) are the same. 

A crucial new observation is the following 
\begin{lemma}\label{lemmaintro1}
A dg algebra $B$ is a monoid with respect to $\sotimes$ sharing the dg algebra unit if and only if it is an algebra over the braces operad $\Br$ of Getzler-Jones, whose underlying differential on the tensor coalgebra $T^\vee_+(B[1])$ is given by the bar-differential of the underlying dg algebra $B$. 
\end{lemma}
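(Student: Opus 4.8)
The strategy is to compute the skew-monoidal product $\sotimes$ explicitly on dg algebras regarded as one-object dg categories, and then to read off the monoid axioms as the defining relations of $\Br$ through the standard dictionary between brace algebras and dg bialgebra structures on the tensor coalgebra.

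First I would make $B\sotimes B'$ fully explicit, using the description of the twisted tensor product from Section~\ref{sectiontw}. The characteristic feature of $\sotimes$ is that a dg functor $B\sotimes B'\to C$ is not a pair of commuting algebra maps but carries a whole hierarchy of higher components, recording how arguments from the $B$-factor are inserted into strings of arguments from the $B'$-factor. Consequently a multiplication $m\colon B\sotimes B\to B$ unpacks into a degree-zero bilinear operation $m_0\colon B\otimes B\to B$ together with a family of operations $b\otimes(b_1\otimes\dots\otimes b_n)\mapsto b\{b_1,\dots,b_n\}$ of the expected internal degrees. My first claim, proved by directly reading off the definition of $\sotimes$, is that these components are precisely a product together with brace operations; equivalently, that a dg functor $B\sotimes B\to B$ is the same datum as an associative product on the deconcatenation coalgebra $T^\vee_+(B[1])$ making it a dg bialgebra.

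Next I would translate the one-sided associativity axiom (i), the commutativity of diagram~(\ref{skewmonoid1}), into the brace relations. The crucial subtlety is that the associator $\alpha$ is genuinely one-sided and non-invertible, so the two legs $m\circ(m\sotimes\id)$ and $m\circ(\id\sotimes m)\circ\alpha$ are honestly distinct expressions; equating them component by component yields exactly the Gerstenhaber--Voronov higher brace identities. The lowest component recovers associativity of $m_0$, while the mixed components recover the distributivity of the braces over the product and over one another, i.e.\ the associativity of the induced product on $T^\vee_+(B[1])$. I expect this matching to be the main obstacle: one must track precisely which slot-insertions each component of $\alpha$ encodes and verify that the pentagon-consistent associator produces the \emph{complete} set of $\Br$-relations rather than a proper subset of them.

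Finally I would account for the differential and the units. Requiring $m$ to be a chain map for the differential on $B\sotimes B$ forces the codifferential on $T^\vee_+(B[1])$ to agree with the bar-differential of $(B,m_0)$; this is exactly the normalization asserted in the statement, and it also pins down $m_0$ as the \emph{given} dg algebra product of $B$ rather than a deformation of it. The shared-unit axiom (iii) then says that the $\sotimes$-unit is a strict two-sided unit for $m_0$ and that the braces vanish whenever a unit is inserted, which is the standard normalization for $\Br$-algebras. For the converse, I would run the dictionary backwards: given a $\Br$-algebra whose associated codifferential on $T^\vee_+(B[1])$ is the bar-differential, I define $m$ on each component of $B\sotimes B$ by the corresponding brace operation and verify (i) and (iii) by re-reading the brace relations through the explicit form of $\alpha$. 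Since each step is a reversible correspondence between structure maps and generators, the two implications are formally dual, and essentially all of the content is concentrated in the single associator computation flagged above.
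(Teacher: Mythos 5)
Your overall route coincides with the paper's: present $B\sotimes B$ by its generators $b\otimes\id$, $\id\otimes b$, $\varepsilon(b;b_1,\dots,b_n)$ and the relations $(R_1)$--$(R_4)$ of Section \ref{sectiontwtp}; use the shared unit to normalize $m$ on the first two kinds of generators (which already forces your $m_0$ to be the \emph{given} product of $B$, since $b_1\otimes b_2=(b_1\otimes\id)\star(\id\otimes b_2)$ and $m$ is a functor --- this is due to functoriality plus axiom (iii), not to the chain-map condition as you assert); set $\{b;b_1,\dots,b_n\}:=m(\varepsilon(b;b_1,\dots,b_n))$; and extract the brace identities from the monoid axioms, the heart of the matter being that the explicit one-sided associator of Theorem \ref{theoremskew}(iv) reproduces the brace composition law \eqref{eqbrace3}. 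You correctly flag that matching as the main content, and it is exactly how the paper argues.

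However, your ``first claim'' is false as stated, and the bookkeeping of which axiom yields which relation needs repair. A dg functor $m\colon B\sotimes B\to B$ normalized on units is \emph{not} the same datum as an associative product on $T^\vee_+(B[1])$. Functoriality only forces compatibility with the relations and the differential of $B\sotimes B$, which under the dictionary gives: unitality of the braces (from $(R_3)$), the cup--brace distributivity \eqref{eqbrace2} (from $(R_4)$, i.e.\ \eqref{eqsuper}), and the differential identity \eqref{eqbrace4} (from \eqref{eqd0}--\eqref{eqd1}). It cannot give the brace--brace relation \eqref{eqbrace3}: the expression $x\{y_1,\dots,y_m\}\{z_1,\dots,z_n\}$ is computed by applying $m$ twice, i.e.\ it is $m\circ(m\sotimes\id)$ evaluated on $\varepsilon(\varepsilon(x;y_1,\dots,y_m);z_1,\dots,z_n)$, which is an element of $(B\sotimes B)\sotimes B$, so no relation internal to $B\sotimes B$ can constrain it. Relation \eqref{eqbrace3} is precisely the commutativity of diagram \eqref{skewmonoid1} evaluated on such elements through the formula \eqref{eqassoc4bis}, and that diagram yields nothing further: on $\varepsilon$-free elements it reduces to associativity of the product of $B$, which is given, and it does \emph{not} produce \eqref{eqbrace2}, which must already hold for $m$ to be well defined. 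So your first claim and your second step each overreach and in fact contradict one another (each purports to deliver associativity of the product on $T^\vee_+(B[1])$). Once the attributions are fixed --- functoriality and the shared unit give \eqref{eqbrace1}, \eqref{eqbrace2}, \eqref{eqbrace4} and unitality, while the associativity diagram \eqref{skewmonoid1} gives exactly \eqref{eqbrace3} --- your outline, read forwards and backwards on generators, becomes precisely the paper's proof.
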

We recall the operad $\Br$ in Section \ref{sectionbrace}.
The proof is given by a direct but nice computation, see Section \ref{prooflemma1}.

The brace operad is exactly the restriction to the color $c$ suboperad of $\mathcal{O}_0$, so it is not a surprise that the data of $(B,A)$ satisfying (i)-(iii) above is exactly the data of an algebra over the colored operad $\mathcal{O}_0$.

Thus the Swiss Cheese property for $\Alg(\mathcal{O}_0)_A$ can be proven in vein of the general argument of [B2], which is outlined in the next Subsection.

\subsection{\sc }
A very baby version of the Swiss Cheese conjecture is the following elementary statement:

{\it Let $S$ be a set. Then the category of pairs (a monoid $M$, an $M$-module structure on $S$) is equivalent to the category of monoid maps $M\to \Hom(S,S)$, that is, to the comma-category of monoids over the monoid $\Hom(S,S)$.}

In fact, the category of sets is closed monoidal,
\begin{equation}\label{adjbasic}
\Sets(C\times D,E)\simeq \Sets(C, [D,E])
\end{equation}
where $[D,E]$ is the set $\Sets(D,E)$, but, keeping in mind some development of this idea, we regard it as an internal $\Hom$. 

It follows that, for a monoid $M$,
\begin{equation}\label{adjmon}
\Sets(M\times S,S)\simeq \Sets(M,[S,S])
\end{equation}
In both sides there stand the maps of sets. On the other hand, we like to replace the r.h.s. $\Hom$ by maps of {\it monoids}
and at the l.h.s. consider only those maps which endow $S$ with an $M$-set structure. It goes automatically if we interpret both ``refined'' Homs as suitable equalisers, namely
\begin{equation}\label{equal1}
\{\text{$M$-set structures on $S$}\}=\Eq\big(\Sets(M\times S, S)\rightrightarrows \Sets(M\times M\times S,S)\big)
\end{equation}
and 
\begin{equation}\label{equal2}
\Hom_\Mon(M,[S,S])=\Eq\big(\Sets(M,[S,S]))\rightrightarrows\Sets(M\times M,[S,S])\big)
\end{equation}
and note that under the adjunction \eqref{adjbasic} one pair of arrows corresponds to another. 

In particular, for a given set $S$, the category whose objects are monoids $M$ acting on $S$, and whose morphisms are maps of monoids $M\to [S,S]$, has a final object: it is, in terms of the r.h.s. of the adjunction, the identity map $\id: M=[S,S]\xrightarrow{\id} [S,S]$. So it is the baby version of the Swiss Cheese type statements. 

Next, one can categorify the above scheme. It means that we consider a monoidal category acting on a category (instead of a monoid acting on a set). For categories there is an adjunction similar to \eqref{adjbasic}:

\begin{equation}\label{adjbasicbis}
\Cat(C\times D,E)\simeq \Cat(C,[D,E])
\end{equation}
where $\Cat(-,=)$ denotes the set of functors, and 
 $[D,E]$ is the category whose objects are functors $F\colon D\to E$, and whose morphisms from $F$ to $G$ is the set of natural transformations. 

Let $M$ be a monoidal category and $C$ a category. Then \eqref{adjbasicbis} gives 
\begin{equation}\label{adjmonbis}
\Cat(M\times C,C)\simeq \Cat(M,[C,C])
\end{equation}
Note that $[C,C]$ is a monoidal category with the monoidal product equal to the composition. 

In both sides of \eqref{adjmonbis} there are subsets with objects formed by strict actions of a monoidal category on a category and by strict monoidal functors of monoidal categories, correspondingly. They are defined via equalisers analogous to \eqref{equal1}, \eqref{equal2}, correspondingly, which correspond one to another by the adjunction.  In conclusion,
the set of strict actions of $M$ on $C$ is isomorphic to the set of strict monoidal functors $M\to [C,C]$. 

In particular, one gets the following Swiss Cheese type statement: for a given category $C$, the category $\Act(C)$ whose objects are strict monoidal categories $M$ strictly acting on $C$, and whose morphisms are strict monoidal functors, has a final object. It is the identity map $m=[C,C]\xrightarrow{\id}[C,C]$.

Now we can ``decategorify'' the previous statement, meaning restricting to the case of categories with a single object. 
By the Eckmann-Hilton argument, a monoidal category with a single object is just a commutative monoid $K$. Its single object should map to the unit of the monoidal category and should be mapped by a monoidal functor to the identity functor of $C$. Thus what we get in the r.h.s. is a monoid map
\begin{equation}
K\to [C,C](\id,\id)=Z(C)
\end{equation}
(where $Z(C)$ for the single object category $C$ is the center of the monoid $C$.

We conclude that, for a given monoid $C$, an action of a commutative monoid $K$ on $C$ is the same as the monoid map $K\to Z(C)$. In particular, the category $\Act^\comm(C)$ has a final object, given by $\id\colon Z(C)\to Z(C)$.

Clearly all these claims remain true for the $V$-enriched (monoidal) categories for any closed monoidal category $V$.

The idea is that the Kontsevich Swiss Cheese conjecture should be a suitably derived version of the latter statement on a commutative monoid action. Indeed, working with too restrictive strict context, we get collapses in both sides: in a relaxed context, the Eckmann-Hilton argument is not literally applied, and instead of a commutative monoid one should get roughly an $\e_2$-algebra, and similarly in the strict case $Z(C)$ for an algebra $C$ is just 0-th Hochschild cohomology (for dg enrichment), which should be replaced by the entire Hochschild complex in a suitable relaxed context. 

In the present paper we show that the twisted tensor product $\sotimes$ on $\Cat_\dg(\k)$ provides such a relaxed context.

The twisted tensor product of small dg categories is left adjoint to the ``coherent internal Hom'' defined as follows. Let $C,D$ be small dg categories. The category $[C,D]_\coh$ has dg functors $F\colon C\to D$ as objects, and for two dg functors $F,G\colon C\to D$ the Hom-complex is the reduced Hochschild complex $[C,D]_\coh(F,G)=\Hoch^\udot(C, M_{F,G})$, where $M_{F,G}$ is a $C$-bimodule defined as $M_{F,G}(X,Y)=D(FX,GY)$, $X,Y\in \Ob C$. We discuss motivations for this definition in Section \ref{sectioncohhom}, see Remark \ref{remcohhom}. Then the functor $[D,?]_\coh$ has a left adjoint $?\sotimes D$:
\begin{equation}\label{adjtw}
\Cat_\dg(C\sotimes D,E)\simeq \Cat_\dg(C, [D,E]_\coh)
\end{equation}
The dg category $C\sotimes D$ is called {\it the twisted tensor product} of dg categories $C$ and $D$. 

Due to this adjunction, the previously employed way of arguing is applied. Indeed, by Lemma \ref{lemmaintro1} a dg algebra is a $\sotimes$-monoid iff its a brace algebra (and the brace operad is known to be weakly equivalent to $\e_2$ [T4]). Also the internal Hom complexes $[C,D]_\coh(F,G)$ are given by the Hochschild complexes with some coefficients. In particular case, $[C,C](\id,\id)=\Hoch^\udot(C)$. 

Thus, working with $(\Cat_\dg,\sotimes)$ the Eckmann-Hilton argument breaks to the expected statement of Lemma \ref{lemmaintro1}, and the space $[C,C]_\coh(\id,\id)$ also becomes equal to the expected $\Hoch^\udot(C)$ (instead of its 0-th cohomology $Z(C)$).

It shows that $\Alg(\mathcal{O}_0)_A$ has a final object isomorphic to $(\Hoch^\udot(A),A)$ for any dg algebra $A$.

\comment

\subsection{\sc } 
One easily sees from the latter statement that $(\Hoch(A), A)$ remains a final object in the localised category $\Ho(\Alg(\mathcal{O}_0)_A)$. Unfortunately there are no homotopy theoretical means (at least we are not aware about such means) to deduce from this statement a similar statement for a quasi-isomorphic to $\mathcal{O}_0$ colored dg operad $\mathcal{O}$. Indeed, as we already observed the set-theoretical fibre does not inherit any model structure from the ambient model category in general, so the standard homotopical technique can not be applied. On the other hand, the original conjecture is stated for the  weakly equivalent to $\mathcal{O}_0$ colored dg operad $\scc$, so a way to pass from one operad to a quasi-isomorphic one is needed. 

Our approach relies on the following

\begin{prop}\label{propintro1}
For a dg algebra $A$, the set-theoretical fibre of the homotopy category $\Ho(\Alg(\mathcal{O}_0))_A$ has a final object. Moreover, this final object is still $(\Hoch^\udot(A),A)$ corresponded to the identity map of $\Hoch^\udot(A)$ by adjunction \eqref{adjtw}.
\end{prop}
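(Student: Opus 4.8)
The plan is to identify the set-theoretical fibre $\Ho(\Alg(\mathcal{O}_0))_A$ with the slice (over-)category of brace algebras over $\Hoch^\udot(A)$ in the homotopy category, and then to invoke the elementary fact that any over-category $\mathcal C/X$ has $\id_X$ as its terminal object. Conceptually this is the homotopy refinement of Theorem~\ref{theorem1}: there the \emph{strict} fibre $\Alg(\mathcal{O}_0)_A$ was identified with the comma category $\Alg(E_2^{\mathcal{O}_0})/\Hoch^\udot(A)$, and now the same identification is wanted after localising the \emph{total} category $\Alg(\mathcal{O}_0)$ and restricting to the fibre over $A$. Throughout I use Lemma~\ref{lemmaintro1} to read $E_2^{\mathcal{O}_0}$-algebras as $\Br$-algebras, so that $\Alg(E_2^{\mathcal{O}_0})=\Alg(\Br)$.

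First I record the universal arrow. By Theorem~\ref{theorem1} the object $(\Hoch^\udot(A),A)$ is terminal in the strict fibre $\Alg(\mathcal{O}_0)_A$, so for every $(B,A)$ there is a canonical strict morphism $c_{(B,A)}\colon(B,A)\to(\Hoch^\udot(A),A)$ lying over $\id_A$; its class in the localisation is a morphism of $\Ho(\Alg(\mathcal{O}_0))_A$, which settles existence. The whole content is therefore \emph{uniqueness}.

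For uniqueness I would compute the relevant Hom-set using the (semi)model structure on $\Alg(\mathcal{O}_0)$ (for $\cchar\k>0$ via $\Sigma$-cofibrancy, no hypothesis needed when $\cchar\k=0$). Over a field every object is fibrant, so any $g\colon(B,A)\to(\Hoch^\udot(A),A)$ in $\Ho(\Alg(\mathcal{O}_0))$ is represented by a map $f\colon(\widetilde B,\widetilde A)\to(\Hoch^\udot(A),A)$ out of a cofibrant replacement $q\colon(\widetilde B,\widetilde A)\xrightarrow{\sim}(B,A)$, taken modulo homotopy. Lying over $\id_A$ means $[f_A]=[q_A]$ in $\Ho(\Alg(E_1^{\mathcal{O}_0}))$, whence $f_A\colon\widetilde A\to A$ is a quasi-isomorphism of dg algebras. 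By the adjunction \ref{adjtw} the action part of $f$ is the same datum as a $\Br$-algebra map $\widetilde B\to[\widetilde A,A]_\coh(f_A,f_A)=\Hoch^\udot(\widetilde A,{}_{f_A}A_{f_A})$, and since $f_A$ is a quasi-isomorphism the restriction $\Hoch^\udot(A)=\Hoch^\udot(A,A)\to\Hoch^\udot(\widetilde A,{}_{f_A}A_{f_A})$ is a quasi-isomorphism compatible with the brace structures, by invariance of the Hochschild complex under quasi-isomorphisms of the algebra argument. Passing to $\Ho(\Alg(E_2^{\mathcal{O}_0}))$ this yields a natural bijection
\[
\Ho(\Alg(\mathcal{O}_0))_A\big((B,A),(\Hoch^\udot(A),A)\big)\;\cong\;\big(\Ho(\Alg(E_2^{\mathcal{O}_0}))/\Hoch^\udot(A)\big)(\beta_B,\id),
\]
where $\beta_B\colon B\to\Hoch^\udot(A)$ is the structure map defining $(B,A)$. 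The right-hand Hom-set into the terminal object $\id_{\Hoch^\udot(A)}$ of the slice category is a single point, forcing the element to be $\beta_B$, i.e.\ the class of $c_{(B,A)}$; this gives uniqueness and identifies $(\Hoch^\udot(A),A)$ as terminal.

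The main obstacle is precisely the failure of $p\colon\Alg(\mathcal{O}_0)\to\Alg(E_1^{\mathcal{O}_0})$ to be a Grothendieck (op)fibration: a cofibrant replacement in the total category changes the $a$-colour component from $A$ to a merely quasi-isomorphic $\widetilde A$, and a general morphism of the localised fibre is a zig-zag whose intermediate stages need not lie over $A$. The technical heart of the argument is to rectify such zig-zags to the normal form above, which amounts to checking that the adjunction \ref{adjtw} descends to the homotopy categories and that the internal Hom complex $[\widetilde A,A]_\coh(f_A,f_A)$ is, through the weak equivalence $f_A$, canonically identified — as a $\Br$-algebra in the homotopy category — with the endomorphism object $[A,A]_\coh(\id,\id)=\Hoch^\udot(A)$. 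Once this homotopy-coherent form of the adjunction is established, terminality of $\id_{\Hoch^\udot(A)}$ in the over-category delivers the Proposition.
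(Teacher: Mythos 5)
You should know at the outset that the paper contains no proof of this Proposition to compare against: in the LaTeX source the statement sits inside one of the commented-out blocks (the \texttt{comment} macro deletes it from the compiled text), together with the authors' own remark that the proof ``is not really straightforward'', that it relies essentially on Keller's results on derived invariance of the Hochschild complex, and that it is proved in a section which does not exist in this version; the visible text explicitly defers the homotopy-theoretic refinement to a sequel paper. What the paper actually proves is the strict statement, Theorem \ref{theorem1}: an equivalence $\Alg(\mathcal{O}_0)_A\simeq \Alg(E_2^{\mathcal{O}_0})/\Hoch^\udot(A)$ via Corollary \ref{corrr}, whence a final object in $\Ho(\Alg(\mathcal{O}_0)_A)$, the \emph{localisation of the strict fibre}. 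The Proposition concerns the genuinely stronger $\Ho(\Alg(\mathcal{O}_0))_A$, the \emph{fibre of the localisation}, whose morphisms are classes of zig-zags passing through objects that need not lie over $A$. Your existence step (quoting Theorem \ref{theorem1}) is correct, and it is to your credit that you identified the same key external input the authors themselves point to, namely quasi-isomorphism invariance of Hochschild complexes.

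As a proof, however, your uniqueness argument has two genuine gaps, and they sit exactly where the difficulty lies. First, the claim that $\Hoch^\udot(A)\to\Hoch^\udot(\widetilde A,{}_{f_A}A_{f_A})$ is ``a quasi-isomorphism compatible with the brace structures'' does not typecheck: the target is a Hochschild complex with coefficients in a bimodule which is not an algebra, so it carries no brace structure at all --- only module-type structures over $\Hoch^\udot(A)$ and $\Hoch^\udot(\widetilde A)$ coming from the composition of Lemma \ref{lemmatwcomp}. Upgrading the quasi-isomorphism $f_A\colon\widetilde A\to A$ to an identification of $\Hoch^\udot(A)$ with $\Hoch^\udot(\widetilde A)$ \emph{in} $\Ho(\Alg(\Br))$, compatibly with the two adjoint structure maps, is precisely Keller's theorem (a zig-zag of $B_\infty$-morphisms through an intermediate object that is not itself a brace algebra); it cannot be cited as generic ``invariance of the Hochschild complex''. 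Second, your displayed bijection is asserted (``Passing to $\Ho(\Alg(E_2^{\mathcal{O}_0}))$ this yields a natural bijection'') rather than proved, and it is essentially a restatement of the Proposition: the right-hand side is a singleton for formal reasons, so the bijection \emph{is} the claim that the left-hand side is a singleton. To establish it you would have to show (i) that your assignment is well defined on homotopy classes and independent of the chosen cofibrant representative (itself conditional on the unverified $\Sigma$-cofibrancy of $\mathcal{O}_0$ in positive characteristic, needed for the semi-model structure), (ii) that it is injective, i.e.\ that two maps over $\id_A$ with the same adjoint class already coincide in $\Ho(\Alg(\mathcal{O}_0))$, and (iii) that an arbitrary zig-zag in the fibre of the localisation can be rectified to your normal form --- the step you yourself call ``the technical heart'' in your last paragraph but do not carry out. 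As it stands, the proposal is a sensible plan assembling the right ingredients, not a proof; this is consistent with the authors' judgement that the statement is not straightforward and their decision to postpone it.
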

The proof is not really straightforward. In particular, it essentially relies on results of B.Keller from his nice paper [Ke] on derived invariance of Hochschild complex.  We prove Proposition \ref{propintro1} in Section \ref{section3}.

\comment
Let $\mathcal{O}$ be a $\Sigma$-cofibrant colored operad in a symmetric monoidal model category $\mathscr{M}$, which admits a commutative Hopf interval [BM1] Sect. 3. Then there is a transferred model structures on the category $\Alg(\mathcal{O})$ [BM2] Th.2.1. If $\mathscr{M}$ is moreover left proper model category, any map $\theta\colon \mathcal{O}_1\to \mathcal{O}_2$ of $\Sigma$-cofibrant operads in $\mathscr{M}$ produces a Quillen adjunction 
$$
L\colon \Alg(\mathcal{O}_1)\rightleftarrows \Alg(\mathcal{O}_2): R
$$
where $R=\theta^*$ and $L(-)=\mathcal{O}_2\circ_{\mathcal{O}_1}-$ [BM2] Th.4.1. This Quillen adjunction is a Quillen equivalence if $\theta$ is a weak equivalence of operads loc.cit. In particular, in the latter case the homotopy categories $\Ho(\Alg(\mathcal{O}_1)$ and $\Ho(\Alg(\mathcal{O}_2)$ are equivalent. The assumptions on $\mathscr{M}$ hold for all monoidal categories we work with, see [BM1], Examples 3.3.
\endcomment

\subsection{\sc }
The paper is organised as follows: 

\noindent In Section 2 we recall the definition and basic properties of the twisted tensor product on the category of small dg categories.

\noindent In Section 3 we recall the brace operad, and prove Lemma \ref{lemmaintro1}. Then we prove Theorem \ref{theorem1}.

\comment

\subsection*{\sc Concluding remarks} Even though we did not use the theory of $n$-operads in its full force here, there is an important $2$-operad at the background of our argument, namely  the $2$-operadic analogue of the symmetric brace operad $\Br$,
constructed by the second author in \cite{Sh2} using the twisted tensor product of dg categories. Let us denote this $2$-operad $\mathcal{B}$ (it has been denoted $\mathcal{O}$ in \cite{Sh2}).  

It was proven in \cite{BCW} that the passage from multitensors  on suitably generalised categories  enriched in a symmetric monoidal category $V$ to $2$-operads is invertible in some sense.  Any $V$-enriched $2$-operad $\mathcal{P}$ determines a multitensor (which is a slightly more general concept than a skew tensor product) on the category of $\mathcal{P}_1$-algebras, where $\mathcal{P}_1$ is the $1$-operad obtained by restriction of $\mathcal{P}$ to the 1-terminal 2-level trees (considered as $1$-ordinals).  For any algebra $X$ of $\mathcal{P}$ and any two objects $x,y\in X$ the ``Hom-space'' $X(x,y)$ has an induced $\mathcal{P}_1$-algebra structure. In the case of 2-operad $\mathcal{B}$ the operad $\mathcal{B}_1$ is  the $1$-operadic (that is, non-symmetric) version of the symmetric operad $\Assoc$, so its category of algebras is the category of dg categories.  

This observation suggests the following conceptual scheme for proving more general Swiss Cheese conjecture type statements which we secretly exploit in our paper.

Let $\mathcal{P}$ be a $V$-enriched $2$-operad. 
We can associate to $\mathcal{P}$ a Swiss Cheese type $2$-operad  $\mathcal{P}^{\mathbf{sc}}$  For this we simply introduce a coloring of the $2$-ordinals as described in [B1, Sect. 9] then we restrict $\mathcal{P}$ along the forgetful functor from colored $2$-ordinals to $2$-ordinals. It is not hard to see that the  
algebras of  $\mathcal{P}^{\mathbf{sc}}$   are pairs $(B,A)$ where $B$ is an algebra of $\mathcal{P}$ and $A$ is an algebra of $\mathcal{P}_1$ on which $B$ acts in appropriate sense.
Thus  we can  speak about the category of actions $\Act^{\mathcal{P}}(A)$.

Now assume that the multitensor associated to the $2$-operad $\mathcal{P}$  is closed from one side (meaning it has the right adjoint internal Hom, as in the case of the twisted product of dg-categories). In this case we get the following strong form of a formal Swiss Cheese conjecture, closely following the argument presented in our paper:  

\noindent {\it For a 1-terminal $\mathcal{P}_1$-algebra $A$ the set theoretical fiber $\Act^{\mathcal{P}}(A)$ is equivalent  to the comma category $\Alg(\Sym_2(\mathcal{P}))/\Hoch^\udot(A),$ where $\Sym_2(\mathcal{P})$ is the symmetrisation of $\mathcal{P}$ {\rm [B1]}.} 
Here $\Hoch^\udot(A)$ is the internal hom $[A,A](\id,\id)$.  

Therefore, if $V$ is a monoidal model category, the category $\Act(A)$ can be equipped with a model structure under some mild condition on $V$ and so  $\Ho(\Act(A))$ has a final object.   

If, moreover, $\mathcal{P}$ is contractible and cofibrant (or $\Sigma$-cofibrant in an appropriate sense), we have a Quillen equivalence $\Alg(\Sym_2(\mathcal{O})) \cong \Alg(\e_2)$ [B1], and so we would get the Swiss Cheese conjecture in its original form.

This program for understanding the Swiss Cheese conjecture (in any dimension) and its generalisations  has been presented by the first author in a lecture at University of Adelaide [B2]. 
For a partial realisation of this programme in weak 2-categorical context, see [Ju].

There is, however, an obstacle for an immediate implementing of this program in desirable generality: the question on existence of the internal Hom for a multitensor determined by a $2$-operad turns out to be quite subtle.  There is a well known counterexample of a non-closed multitensor of this kind, namely, the tensor product of Gray-categories introduced by Crans in [BCW], [C]. It seems that the best we can hope in general is that such a multitensor is only homotopically closed which fortunately would be enough  for a proof of the Swiss Cheese conjecture in its original form. We will investigate this question in our future work.   

\endcomment

\subsection*{\sc Concluding remarks} Even though we did not use the
theory of $n$-operads in its full force here, there is an important
$2$-operad at the background of our argument, namely  the $2$-operadic
analogue of the symmetric brace operad $\Br$,
constructed by the second author in \cite{Sh2} using the twisted tensor
product of dg categories. Let us denote this $2$-operad $\mathcal{B}$
(it has been denoted $\mathcal{O}$ in \cite{Sh2}).

It was proven in \cite{BCW} that the passage from multitensors  on
suitably generalised categories  enriched in a symmetric monoidal
category $V$ to $2$-operads is invertible in some sense.  Any
$V$-enriched $2$-operad $\mathcal{P}$ determines a multitensor (which is
a slightly more general concept than a skew tensor product) on the
category of $\mathcal{P}_1$-algebras, where $\mathcal{P}_1$ is the
$1$-operad obtained by restriction of $\mathcal{P}$ to the 1-terminal
2-level trees (considered as $1$-ordinals).  For any algebra $X$ of
$\mathcal{P}$ and any two objects $x,y\in X$ the ``Hom-space'' $X(x,y)$
has an induced $\mathcal{P}_1$-algebra structure. In the case of
2-operad $\mathcal{B}$ the operad $\mathcal{B}_1$ is  the $1$-operadic
(that is, non-symmetric) version of the symmetric operad $\Assoc$, so
its category of algebras is the category of dg categories. The twisted
tensor product of dg categories is exactly the multitensor associated to
the operad $\mathcal{B}$.

This observation suggests the following conceptual scheme for proving
more general Swiss-Cheese conjecture type statements which we secretly
exploit in our paper.

Let $\mathcal{P}$ be a $V$-enriched $2$-operad.
We can associate to $\mathcal{P}$ a Swiss-Cheese type $2$-operad
$\mathcal{P}^{\mathbf{sc}}$  For this we simply introduce a coloring of
the $2$-ordinals as described in [B1, Sect. 9] then we restrict
$\mathcal{P}$ along the forgetful functor from colored $2$-ordinals to
$2$-ordinals. It is not hard to see that the
algebras of  $\mathcal{P}^{\mathbf{sc}}$   are pairs $(B,A)$ where $B$
is an algebra of $\mathcal{P}$ and $A$ is an algebra of $\mathcal{P}_1$
on which $B$ acts in appropriate sense.
Thus  we can  speak about the category of actions $\Act^{\mathcal{P}}(A)$.

Now assume that the multitensor associated to the $2$-operad
$\mathcal{P}$  is closed from one side (meaning it has the right adjoint
internal Hom, as in the case of the twisted product of dg-categories). In
this case we get the following strong form of a formal Swiss-Cheese
conjecture, closely following the argument presented in our paper:

\begin{conjecture} For a 1-terminal $\mathcal{P}_1$-algebra $A$ the set
theoretical fiber $\Act^{\mathcal{P}}(A)$ is equivalent  to the comma
category $\Alg(\Sym_2(\mathcal{P}))/\Hoch^\udot(A),$ where
$\Sym_2(\mathcal{P})$ is the symmetrisation of $\mathcal{P}$ {\rm [B1]}.
where $\Hoch^\udot(A)$ is the internal hom $[A,A](\id,\id)$.
\end{conjecture}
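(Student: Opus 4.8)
The plan is to run, in the generality of an arbitrary $V$-enriched $2$-operad $\mathcal{P}$, exactly the chain of adjunction arguments carried out in the body of the paper for $(\Cat_\dg(\k),\sotimes)$, replacing the twisted tensor product by the multitensor $\otimes_\mathcal{P}$ that $\mathcal{P}$ determines on $\Alg(\mathcal{P}_1)$ via the correspondence of [BCW]. The first step is to reconcile the three relevant notions of \emph{monoid}. By the symmetrisation isomorphism of [B1] one has $\Alg(\mathcal{P})\cong\Alg(\Sym_2(\mathcal{P}))$ on the nose, and by the multitensor/$2$-operad dictionary the monoids in $(\Alg(\mathcal{P}_1),\otimes_\mathcal{P})$ that share the algebra unit are precisely the $\mathcal{P}$-algebras. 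This is the abstract form of Lemma \ref{lemmaintro1}: a unit-sharing monoid for the multitensor is the same datum as a $\Sym_2(\mathcal{P})$-algebra. I would then unwind the coloured $2$-operad $\mathcal{P}^{\mathbf{sc}}$ and check that an object of the set-theoretical fibre $\Act^{\mathcal{P}}(A)$ is a monoid $B$ in $\otimes_\mathcal{P}$ together with an action map $\ell\colon B\otimes_\mathcal{P} A\to A$ satisfying the module and unit axioms, the morphisms being maps of monoids restricting to the identity on the fixed $\mathcal{P}_1$-algebra $A$ --- the exact analogue of conditions (i)--(iii) encoded by \eqref{skewmonoid1}--\eqref{skewmonoid2}.

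Next I invoke the standing hypothesis that $\otimes_\mathcal{P}$ is closed from one side, i.e.\ that there is an internal Hom with adjunction $\Alg(\mathcal{P}_1)(B\otimes_\mathcal{P} A,A')\simeq\Alg(\mathcal{P}_1)(B,[A,A'])$, the abstract form of \eqref{adjtw}. Encoding the action map $\ell$ together with its associativity as an equaliser, and encoding the monoid maps into the internal endomorphism object by the parallel equaliser, exactly as in \eqref{equal1}--\eqref{equal2}, one checks that the two pairs of arrows correspond under this adjunction. Hence an action of the monoid $B$ on $A$ is the same datum as a monoid map $B\to[A,A]$, where $[A,A]$ carries its composition monoid structure. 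This reproduces the baby equivalence \eqref{adjmon} one categorical level up, now for the multitensor rather than for $\times$ in $\Sets$.

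The decisive step is to compute the target. Here I use that $A$ is $1$-terminal (the analogue of a one-object category): just as a one-object dg category forces any $\sotimes$-endofunctor into the identity component, the $1$-terminality of $A$ makes the relevant part of the endomorphism monoid $[A,A]$ its identity component $[A,A](\id,\id)$, which by definition is $\Hoch^\udot(A)$. Being the hom-at-the-identity of a monoid for $\otimes_\mathcal{P}$, this object inherits a monoid structure and hence a $\Sym_2(\mathcal{P})$-algebra structure --- the general incarnation of the brace structure on the Hochschild complex provided by Lemma \ref{lemmaintro1}. A monoid map $B\to[A,A]$ out of a $1$-terminal monoid $B$ therefore factors as a $\Sym_2(\mathcal{P})$-algebra map $B\to\Hoch^\udot(A)$, and this assignment is functorial and compatible with the morphisms on both sides. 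Verifying fullness, faithfulness and essential surjectivity against the adjunction and the equaliser descriptions then yields the asserted equivalence $\Act^{\mathcal{P}}(A)\simeq\Alg(\Sym_2(\mathcal{P}))/\Hoch^\udot(A)$, with the identity map of $\Hoch^\udot(A)$ as the final object.

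The main obstacle, granting the one-sided closedness assumed in the hypothesis, is the third step: one must verify that the internal endomorphism object $[A,A]$ is genuinely a monoid for $\otimes_\mathcal{P}$ whose identity component reproduces $\Hoch^\udot(A)$ with the \emph{correct} $\Sym_2(\mathcal{P})$-algebra structure. This is precisely where the multitensor/$2$-operad correspondence must interact coherently with the internal Hom, the point that is subtle in general (cf.\ the non-closed Gray/Crans multitensor of [BCW], [C]); isolating the coherence that makes $[A,A](\id,\id)$ a $\Sym_2(\mathcal{P})$-algebra, and not merely a complex, is the crux. By contrast, the comparison of $\Sym_2(\mathcal{P})$ with $\e_2$ --- needed only to upgrade the equivalence to the original Swiss Cheese conjecture --- plays no role in the statement above, so no contractibility or $\Sigma$-cofibrancy hypothesis on $\mathcal{P}$ is required here.
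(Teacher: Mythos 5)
Note first that the statement you were asked to prove is labelled a \emph{Conjecture} in the paper: the authors prove it only in the single instance $\mathcal{P}=\mathcal{B}$, the $2$-operad associated with the twisted tensor product of dg categories (this instance is Theorem \ref{theorem1}, using $\Sym_2(\mathcal{B})=\Br$), and they explicitly leave the general case open, flagging the obstacles. Your proposal reproduces the program the authors themselves sketch in the concluding remarks, but the steps you dispatch by appeal to a general ``multitensor/$2$-operad dictionary'' are exactly the ones that are not available in the literature, and that the paper can only verify by explicit computation in its special case; so what you have is a restatement of the program, not a proof.

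Concretely: (1) your claim that $\Alg(\mathcal{P})\cong\Alg(\Sym_2(\mathcal{P}))$ ``on the nose'' is false; $\Sym_2$ is a left adjoint to desymmetrisation, and by [B1] its algebras correspond only to the \emph{degenerate} (one-object, one-$1$-arrow) $\mathcal{P}$-algebras, not to all of them. (2) The identification you actually need --- unit-sharing monoids for $\otimes_{\mathcal{P}}$ whose underlying $\mathcal{P}_1$-algebra is $1$-terminal $=$ $\Sym_2(\mathcal{P})$-algebras --- is not part of [BCW]: that paper identifies $\mathcal{P}$-algebras with $\otimes_{\mathcal{P}}$-enriched categories, and the compatibility of this identification with symmetrisation and with the unit-sharing normalisation is precisely what Lemma \ref{lemmaintro1} establishes for $\mathcal{P}=\mathcal{B}$, by matching the explicit associator formula \eqref{eqassoc4bis} against the brace relation \eqref{eqbrace3}. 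Proving an abstract version of this is the mathematical substance of the conjecture, not a preliminary one can cite. (3) A multitensor is strictly more general than a skew-monoidal product (it need not be representable by iterated binary products), so both your reduction of an action to the binary data (i)--(iii) and the equaliser/adjunction argument itself (Lemma \ref{lemmatwfinal}, Corollary \ref{corrr}), which the paper proves only for closed \emph{skew-monoidal} categories, have to be re-founded in the multitensor setting. (4) A smaller but real slip: what forces the adjoint map to land in the component $[A,A](\id,\id)$ is neither the $1$-terminality of $A$ nor that of $B$, but the unit axiom of the action, $\ell(1_B\sotimes a)=a$; this is exactly why Corollary \ref{corrr} carries that hypothesis. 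To your credit, you correctly isolate the crux (the coherent interaction of the internal Hom with the $2$-operadic structure, cf.\ the Crans counterexample the paper cites) and you are right that no contractibility or $\Sigma$-cofibrancy is needed for this statement; but a proof that defers precisely that crux, together with the general form of Lemma \ref{lemmaintro1}, to an unproven dictionary leaves the statement where the paper left it: a conjecture.
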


In case of $\mathcal{P} = \mathcal{B}$ the symmetrisation
$\Sym_2(\mathcal{B})$ is exactly the operad $\Br$ and we get the
equivalence stated in Theorem \ref{theorem1}.
Notice also that the baby version of Swiss-Cheese conjecture from (1.5)
is exactly the conjecture above applied to the terminal $2$-operad in
$\mathbf{Set}$. In this case the associated multitensor is the cartesian
product of categories, the symmetrisation of this operad is the operad
$\mathbf{Com}$ and the internal hom is the usual category of functors and
natural transformations.

 For an elaborated example of our conjecture for the category of
actions of braided categories on a monoidal category  see [Ju]. In this
case the $2$-operad $\mathcal{P}$ is $\mathbf{Cat}$-enriched $2$-operad
whose algebras are Gray-categories. The corresponding multitensor is the
Gray-product of $2$-categories and the Hochschild object of a monoidal
category $A$ is its Drinfeld center with its braided monoidal structure.

If $V$ is a monoidal model category, the category $\Act(A)$ can be
equipped with a model structure under some mild condition on $V$ and so 
$\Ho(\Act(A))$ has a final object.
If, moreover, $\mathcal{P}$ is contractible and cofibrant (or
$\Sigma$-cofibrant in an appropriate sense), we have a Quillen
equivalence $\Alg(\Sym_2(\mathcal{O})) \cong \Alg(\e_2)$ [B1], and so we
would get the Swiss-Cheese conjecture in its original form.

This higher categorical understanding of the Swiss-Cheese conjecture (in
any dimension) and its generalisations  has been presented by the first
author in a lecture at University of Adelaide [B2].
There is, however, an obstacle for immediate implementation of this
program in desirable generality: the question on existence of an internal
Hom for a multitensor determined by a $2$-operad turns out to be quite
subtle.  There is a well known counterexample of a nonclosed multitensor
of this kind, namely, the tensor product of Gray-categories introduced
by Crans in \cite{BCW,C}. It seems that the best we can hope in general
is that such a multitensor is only homotopically closed which
fortunately would be enough  for the proof of the Swiss-Cheese
conjecture in its homotopical form. We will investigate this question in
our future work.

\subsubsection*{\sc Acknowledgements}

\noindent The work of B.Sh. was partially financed by the Russian Science Foundation via agreement ${\rm N\textsuperscript{\underline{o}}}$ 24-21-00119.

\section{\sc The twisted tensor product of dg categories}\label{sectiontw}
\subsection{\sc Reminder on the coherent internal Hom}\label{sectioncohhom}
Let $\k$ be a field of any characteristic, denote by $C^\udot(\k)$ the category of complexes of $\k$-vector spaces, and by $\Cat_\dg(\k)$ the category of small dg categories over $\k$. 
Let $C,D\in\Cat_\dg(\k)$. Their {\it tensor product} $C\otimes D$ is defined as the dg category with objects $\Ob(C)\times\Ob(D)$, and whose $\Hom$s are $(C\otimes D)(c_0\times d_0, c_1\times d_1)=C(c_0,c_1)\otimes_R D(d_0,d_1)$. This operation defines a symmetric monoidal structure on  $\Cat_\dg(\k)$. This symmetric monoidal structure is closed: the internal Hom $[D,E]_0$, for $D,E\in \Cat_\dg(\k)$, is defined as the dg category whose objects are dg functors $D\to E$, and for two such functors $F,G$ the complex $[C,D]_0(F,G)$ is defined as enriched natural transformations, that is, as those elements of $\prod_{d\in D}E^\udot(F(d), G(d))$ which posses the usual naturality for (complexes of) morphisms in $D$. The closeness means that
\begin{equation}\label{eq1.1}
\Hom(C\otimes D,E)\simeq \Hom(C,[D,E]_0)
\end{equation}
where $\Hom$ stands for morphisms in $\Cat_\dg(\k)$, that is, for the set of dg functors. 

The coherent Hom (denoted by $[D,E]$) has the same objects as $[D,E]_0$, but the morphisms are defined as
$$[D,E](F,G)=\Hoch^\udot(D,M_{F,G})$$
where $\Hoch^\udot(D,M)$ stands for the reduced cohomological Hochschild complex of $D$ with coefficients in a $D$-bimodule $M$, and $M_{F,G}(d_0,d_1)=E(F(d_0),G(d_1))$ is a particular (dg) $D$-bimodule. (A dg $D$-bimodule is by definition a dg functor $D^\op \otimes D\to C^\udot(\k)$).

Recall that explicitly $\Hoch^\udot(D,M)$ is the reduced totalisation of the following cosimplicial object $X^\udot(D,M)$ in $C^\udot(\k)$:
$$
X^\ell(D,M)=\prod_{d_0,\dots,d_\ell\in D}\underline{\Hom}_\k\big(D(d_{\ell-1},d_\ell)\otimes_R\dots\otimes_R D(d_0,d_1), M(d_0,d_\ell)\big)
$$
where $\underline{\Hom}_\k$ stands for the internal $\Hom$ in the category $C^\udot(\k)$. 

The cosimplicial structure on $X^\udot(D,M)$ is well-known (see e.g. [T1]). For instance, for $\Psi\in X^0(D,M)$ one has 
$$
\delta_0(\Psi)(f)=R(f)\Psi(d_1),\ \ \delta_1(\Psi)(f)=L(f)\Psi(d_0)
$$
for $f\in D(d_0,d_1)$, where $L$ and $R$ stand for the left (resp. right) action of $D$ on $M$; in our case, $\delta_0\Psi,\delta_1\Psi\in \prod_{d_0,d_1\in D}\underline{\Hom}_R(D(d_0,d_1), E(F(d_0),G(d_1))$. 

In particular, for a 0-cochain $\Psi$, one has $\Psi\in \Ker(\delta_0-\delta_1)$ iff $\Psi$ belongs posses the naturality, that is, iff $\Psi\in [D,E]_0(F,G)$. 

Note that $\Hoch^\udot(D,M)$ is the total-product-complex, the total degree is the sum of the cosimplicial degree and the degree in $C^\udot(\k)$. In particular, an element in $\Hoch^k(D,M)$ has in general infinitely many components given by elements of $\Psi_{k+a}\in X^{k+a}(D,M)$ having degree $-a$ in $C^\udot(R)$. When such an element in $\Hoch^0(D,M)$ is closed with respect the total differential $\delta+d_R$, it results in an infinite system of {\it coherent equations} on its components $\Psi_0,\Psi_1,\Psi_2,\dots$. These equations are 
$$
\delta\Psi_a=d_R\Psi_{a+1}, a=0,1,2\dots
$$
For $a=0$ and $M=M_{F,G}$ the first equation expresses that $\Psi_0(d)=E(F(d),G(d))$ is natural in $f\colon d_0\to d_1$ not on the nose but only up to a boundary:
$$
R(f)\Psi_0(d_1)-L(f)\Psi_0(d_0)=[d_R,\Psi_1(f)]
$$
and then $\Psi_1$ obeys the equation
$$
R(f)\Psi_1(f_2)-\Psi_1(f_2f_1)+L(f_2)\Psi_1(f_1)=[d_R,\Psi_2(f_2\otimes f_1)]
$$
for a chain $d_0\xrightarrow{f_1}d_1\xrightarrow{f_2}d_2$, and so on. 
It explains the name {\it coherent natural transformations} for $[D,E](F,G)$, although is can be properly applied only to closed elements (of arbitrary total degree). 

\begin{remark}\label{remcohhom}{\rm
The reason considering $[D,E]$ as a replacement of $[D,E]_0$ is that (for $D$ cofibrant) it has the correct homotopy type in the localisation with respect to quasi-equivalences of dg categories, see [LH], [F]. In general (without the assumption of cofibrancy of $D$), one has to consider the $A_\infty$ functors $D\to E$ as the objects in order to get the correct homotopy type). 
}
\end{remark}

\subsection{\sc Reminder on the twisted tensor product}\label{sectiontwtp}
It is natural to ask ourselves whether a functor $?\mapsto ?\sotimes D$ left adjoint to $?\mapsto [D,?]$ exists, so whether there is an adjunction 
\begin{equation}\label{adjj}
\Hom(C\sotimes D,E)\simeq \Hom(C,[D,E])
\end{equation}
(considered as a derived version of the classical adjunction \eqref{eq1.1}) exists. This is in fact the case, and the product $C\sotimes D$ was constructed in [Sh1] under the name {\it twisted tensor product}, and was further studied in [Sh2]. Here we briefly recall the construction of the twisted tensor product and its properties. 

Let $C$ and $D$ be two small dg categories over $\k$. Here we construct a small dg category $C\sotimes D$ over $\k$ fulfilling \eqref{adjj}. 

The set of objects of $C\sotimes D$ is $\Ob(C)\times \Ob(D)$.
Consider the graded $\k$-linear category $F(C,D)$ with objects $\Ob(C)\times \Ob(D)$ freely generated by $\{f\otimes\id_d\}_{f\in\Mor(C), d\in D}$, $\{\id_c\otimes g\}_{c\in C, g\in \Mor(D)}$, and by the new morphisms $\varepsilon(f;g_1,\dots,g_n)$, specified below. (We write below $\{C\otimes \id_d\}_{d\in D}$ assuming $\{f\otimes\id_d\}_{f\in\Mor(C), d\in D}$ etc).

Let
$c_0\xrightarrow{f}c_1$ be a morphism in $C$, and let $$d_0\xrightarrow{g_1}d_1\xrightarrow{g_2}\dots\xrightarrow{g_n}d_n$$

are chains of composable maps in $D$. For any such chains, with $n\ge 1$, one introduces a new morphism
$$
\varepsilon(f;g_1,\dots,g_n)\in \Hom((c_0,d_0),(c_1, d_n))
$$
of degree
\begin{equation}
\deg \varepsilon(f;g_1,\dots,g_n)=-n+\deg f+\sum\deg g_j
\end{equation}

The new morphisms $\varepsilon(f;g_1,\dots,g_n)$ are subject to the following identities:

\begin{itemize}
\item[$(R_1)$]
$(\id_c\otimes g_1)* (\id_c\otimes g_2)=\id_c\otimes (g_1g_2)$, $(f_1\otimes\id_d)*(f_2\otimes\id_d)=(f_1f_2)*\id_d$
\item[$(R_2)$] $\varepsilon(f;g_1,\dots,g_n)$ is linear in each argument,
\item[$(R_3)$] 
$\varepsilon(f; g_1,\dots,g_n)=0$ if $g_i=\id_y$ for some $y\in \Ob(D)$ and for some $1\le i\le n$,\\
$\varepsilon(\id_x; g_1,\dots,g_n)=0$ for $x\in\Ob(C)$ and $n\ge 1$,
\item[$(R_4)$]
for any $c_0\xrightarrow{f_1}c_1\xrightarrow{f_2}c_2$ and $d_0\xrightarrow{g_1}d_1\xrightarrow{g_2}\dots\xrightarrow{g_N}d_N$
one has:
\begin{equation}\label{eqsuper}
\varepsilon(f_2f_1;g_1,\dots,g_N)=\sum_{0\le m\le N}(-1)^{|f_1|(\sum_{j=m+1}^{N}|g_j|+N-m)}\varepsilon(f_2;g_{m+1},\dots,g_N)\star\varepsilon(f_1;g_1,\dots,g_m)
\end{equation}
\end{itemize}

To make $F(C,D)$ a dg category, one should define the differential $d\varepsilon(f;g_1,\dots,g_n)$.

For $n=1$ we set:
\begin{equation}\label{eqd0}
[d,\varepsilon (f;g)]=\pm (\id_{c_1}\otimes g)\star (f\otimes \id_{d_0})
\pm (f\otimes \id_{d_1})\star (\id_{c_0}\otimes g)
\end{equation}
(see [Sh2], 2.4.1, for the signs).

For $n\ge 2$:
\begin{equation}\label{eqd1}
\begin{aligned}
\ &[d,\varepsilon(f;g_1,\dots,g_n)]=\\
&
\pm(\id_{c_1}\otimes g_n)\star \varepsilon({f;g_1,\dots,g_{n-1}})
\pm \varepsilon({f;g_2,\dots,g_n})\star (\id_{c_0}\otimes g_1)+
\sum_{i=1}^{n-1} \pm \varepsilon({f;g_1,\dots,g_{i+1}\circ g_i,\dots,g_n})
\end{aligned}
\end{equation}
(see loc.cit. for the signs).

One proves that $d^2=0$ and that the differential agrees with relations ($R_1$)-($R_4$) above. 

Note that this construction defines a bifunctor $\Cat_\dg(\k)\times\Cat_\dg(\k)\to\Cat_\dg(\k)$, which is not symmetric in $C$ and $D$. 

It is proven in [Sh2, Prop. 2.4] that \eqref{adjj} indeed holds. 

As well,  for $C,D$ cofibrant in Tabuada model structure on $\Cat_\dg(\k)$ the natural projection $C\sotimes D\to C\otimes D$, sending all $\varepsilon(f; g_1,\dots,g_n)$, $n\ge 1$, to 0, is a quasi-equivalence of dg categories, see [Sh1, Th 2.4].

\subsection{\sc The skew-monoidal structure on $(\Cat_\dg(\k), \sotimes)$}
It is a natural question whether $\sotimes$ obeys a sort of associativity, making $(\Cat_\dg(\k),\sotimes)$ a monoidal category, possibly in a weak sense. This question has been addressed in [Sh2, Sect 3]. Here we briefly recall the results which are employed in this paper. Summing up, $(\Cat_\dg(\k), \sotimes, \underline{\k})$ is a skew-monoidal category with {\it strict} unit $\underline{\k}$ (a dg category with a single object whose endomorphisms is $\k$). Here {\it skew-monoidality} means that there is a one-sided associator
$$
\alpha\colon (C\sotimes D)\sotimes E\to C\sotimes (D\sotimes E)
$$
satisfying the usual pentagon axiom along with other standard axioms [S], [Sh2] Sect. 3.2. 
When the unit maps are not isomorphisms the coherence theorem for skew monoidal categories becomes a more complicated statement than for the classical monoidal case, see [BL]. As we have proved in [Sh2], Prop.3.5, when the unit maps are isomorphisms, the coherence is exactly as in the classical MacLane result. 
It is important for us that formula for $\alpha$ (see Theorem \ref{theoremskew}(iv) below) is almost identical to the formula for the r.h.s. of the relation \ref{eqbrace3} in the brace operad $\Br$, see Section \ref{sectionbrace}. This observation plays a key role in the proof of Lemma \ref{lemmaintro1}. 

The following statement is proven as [Sh2], Theorem 3.1:

\begin{theorem}\label{theoremskew}
For any three dg categories $C,D,E$, there is a unique dg functor
$$
\alpha_{C,D,E}\colon (C\sotimes D)\sotimes E\to C\sotimes (D\sotimes E)
$$
natural in each argument, 
which is the identity map on objects, and which is defined on morphisms as follows:
\begin{itemize}
\item[(i)]
for $f\in C$, $g\in D$, $h\in E$, $X,Y,Z$ objects of $C,D,E$ correspondingly, one has:
\begin{equation}\label{eqassoc1}
\begin{aligned}
\ &\alpha_{C,D,E}((f\star \id_Y)\star \id_Z)=f\star (\id_Y\star\id_Z)\\
&\alpha_{C,D,E}((\id_X\star g)\star\id_Z)=\id_X\star (g\star \id_Z)\\
&\alpha_{C,D,E}((\id_X\star\id_Y)\star h)=\id_X\star(\id_Y\star h)
\end{aligned}
\end{equation}
\item[(ii)]
for $f\in C$, $g_1,\dots,g_k\in D$, $k\ge 1$, and $Z$ an object in $E$, one has:
\begin{equation}\label{eqassoc1bis}
\alpha_{C,D,E}(\varepsilon(f;g_1,\dots,g_k)\star \id_Z)=\varepsilon(f;g_1\star\id_{Z},\dots, g_k\star\id_{Z})
\end{equation}

\item[(iii)]
for $f\in C$, $g\in D$, $h_1,\dots,h_n\in E$, $X$ an object of $C$, $Y$ an object of $D$, one has:
\begin{equation}\label{eqassoc1bisbis}
\begin{aligned}
\ &\alpha_{C,D,E}(\varepsilon(f\star \id_Y;h_1,\dots,h_n))=\varepsilon(f; \id_Y\star h_1,\dots,\id_Y\star h_n)\\
&\alpha_{C,D,E}(\varepsilon(\id_X\star g; h_1,\dots, h_n))=\id_X\star \varepsilon(g;h_1,\dots,h_n)
\end{aligned}
\end{equation}
\item[(iv)]
for $f\in C$, $g_1,\dots,g_k\in D$, $h_1,\dots,h_N\in E$, one has:
\begin{equation}\label{eqassoc4bis}
\begin{aligned}
\ &\alpha_{C,D,E}\big(\varepsilon(\varepsilon(f; g_1,\dots,g_k);h_1,\dots,h_N)\big)=\sum_{1\le i_1\le j_1\le i_2\le j_2\le \dots\le j_k\le N}\\
&\pm \varepsilon\big(f;\   h_1,\dots, h_{i_1}, \varepsilon(g_1;h_{i_1+1},\dots,h_{j_1}), h_{j_1+1},\dots, h_{i_2},\varepsilon(g_2;h_{i_2+1},\dots,h_{j_2}), h_{j_2+1},\dots \big)
\end{aligned}
\end{equation}
where the sum is taken over all ordered sets $\{1\le i_1\le j_1\le i_2\le j_2\le\dots\le j_{k}\le N\}$; for the case $j_\ell=i_\ell$, the corresponding term $\varepsilon(g_\ell;\ h_{i_{\ell+1}},\dots,h_{j_\ell})$ is replaced by $g_\ell\otimes \id_{-}$.
\end{itemize}

\end{theorem}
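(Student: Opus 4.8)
The plan is to construct $\alpha_{C,D,E}$ directly on generators and then verify that it descends to a dg functor; uniqueness will then be automatic. Recall that $(C\sotimes D)\sotimes E$ is generated, as a graded $\k$-linear category, by the elementary morphisms $(f\star\id_Y)\star\id_Z$, $(\id_X\star g)\star\id_Z$, $(\id_X\star\id_Y)\star h$ coming from $C$, $D$, $E$, by the inner braces $\varepsilon(f;g_1,\dots,g_k)\star\id_Z$ of $C\sotimes D$, and by the outer braces $\varepsilon(\phi;h_1,\dots,h_N)$ with $\phi\in\Mor(C\sotimes D)$. Using the outer superassociativity relation $(R_4)$ together with multilinearity $(R_2)$, every outer brace reduces to a sum of composites of outer braces whose first argument is a single generator of $C\sotimes D$, i.e.\ to the cases treated in parts (ii)--(iv); and (i) covers the purely elementary morphisms. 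Hence the values prescribed by (i)--(iv), extended by composition and $\k$-linearity, determine $\alpha$ on all morphisms, which already yields \emph{uniqueness}. (One could instead try to produce $\alpha$ abstractly from the left-closed structure of $(\Cat_\dg(\k),\sotimes)$ via a natural comparison $[D\sotimes E,X]\to[D,[E,X]]$, but since clause (iv) is needed downstream to compare $\alpha$ with the brace operad, the explicit construction is preferable.) The real work is \emph{existence}: showing the prescription is well defined.

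First I would dispose of the easy relations. Multilinearity $(R_2)$ is manifest, since each right-hand side in (i)--(iv) is $\k$-linear in every slot. For the degeneracy relation $(R_3)$ one checks that whenever an argument is an identity the prescribed image vanishes: in (iv) a term with $h_i=\id$ carries either an inner factor $\varepsilon(g_\ell;\dots,\id,\dots)$ or an outer slot $\dots,\id,\dots$, each killed by $(R_3)$ in $C\sotimes(D\sotimes E)$, while $f=\id_X$ or $g_\ell=\id_Y$ collapses the corresponding factor exactly as the target's $(R_3)$ and the $g_\ell\otimes\id_{-}$ convention in (iv) require. Compatibility with the elementary compositions $(R_1)$ is a direct unwinding of the definitions. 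The genuinely substantial relation is the \emph{outer} $(R_4)$: one must show that applying $\alpha$ to the two sides of $\varepsilon(\phi_2\phi_1;h_1,\dots,h_N)=\sum\pm\,\varepsilon(\phi_2;\dots)\star\varepsilon(\phi_1;\dots)$ produces equal elements of $C\sotimes(D\sotimes E)$. After reducing $\phi_1,\phi_2$ to generators of $C\sotimes D$ via the \emph{inner} $(R_4)$, this becomes a purely combinatorial identity relating the shuffle sum over $1\le i_1\le j_1\le\dots\le j_k\le N$ in (iv) to the iterated decomposition of nested braces — precisely the associativity coherence built into the Getzler--Jones brace operad $\Br$. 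I expect the matching of these two sums, \emph{with signs}, to be the main obstacle.

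Once $\alpha$ is a well-defined graded functor, it remains to check it is a chain map, $\alpha\circ d=d\circ\alpha$, on each generator, using the explicit differentials of $\varepsilon$ together with the internal differentials of $C,D,E$ and the Leibniz rule. On the elementary generators and on (ii)--(iii) this is a short term-by-term comparison. The delicate case is again (iv): the brace differential produces three families of terms — a head term $(\id\otimes h_N)\star\varepsilon$, a tail term $\varepsilon\star(\id\otimes h_1)$, and the internal compositions $h_{i+1}\circ h_i$ — and one must see that, after expanding through the shuffle sum, they reorganise into $\alpha$ applied to the differential of $\varepsilon(\varepsilon(f;g_\bullet);h_\bullet)$. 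This is exactly the compatibility of the brace operations with the bar differential, so the same combinatorics and the same sign conventions that govern $(R_4)$ govern this step. Granting these two verifications, $\alpha_{C,D,E}$ is a dg functor, and its naturality in $C,D,E$ is immediate since every clause of (i)--(iv) transports generators functorially. Thus the whole difficulty is concentrated in the sign-accurate combinatorial identity underlying clause (iv) and its differential.
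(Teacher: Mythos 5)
You should first note that this paper contains no internal proof of the statement to compare against: the theorem is imported verbatim from [Sh2], Theorem 3.1, and even the signs in clause (iv) are delegated there. Judged on its own merits, your architecture is the natural one (and, as far as one can tell, the one used in [Sh2]): the values prescribed by (i)--(iv), together with functoriality, $(R_2)$ and the outer $(R_4)$, force the value of $\alpha$ on every morphism of $(C\sotimes D)\sotimes E$, which gives uniqueness; existence then amounts to checking that this prescription is compatible with all the relations and with the differential. Your reduction of a general outer brace $\varepsilon(\phi;h_1,\dots,h_N)$, $\phi\in\Mor(C\sotimes D)$, to clauses (ii)--(iv) is correct, as is your identification of where the difficulty sits.

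The gap is that the two verifications in which the entire content of the theorem resides are announced but never performed. Concretely: (a) the generator $\varepsilon(\varepsilon(f_2f_1;g_\bullet);h_\bullet)$ can be rewritten using the inner relation \eqref{eqsuper} in $C\sotimes D$, then the outer $(R_4)$ and $(R_2)$, as a signed sum of composites of terms covered by (iii)--(iv); one must prove, sign by sign, that this agrees with applying (iv) directly, and likewise that (iv) intertwines the differentials \eqref{eqd0}, \eqref{eqd1}; you explicitly defer both points (``I expect the matching of these two sums, with signs, to be the main obstacle'', ``Granting these two verifications\dots''). (b) The appeal to the Getzler--Jones operad cannot substitute for this computation: the identity to be checked lives in the dg category $C\sotimes(D\sotimes E)$, which is not a brace algebra --- the nested expressions $\varepsilon(\varepsilon(-;-);-)$ are not brace operations on any single object but exist only because the twisted product is iterated --- so the coherence of $\Br$ does not apply to it directly. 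Moreover, within this paper the logical dependence runs the other way: Lemma \ref{lemmaintro1} \emph{derives} the brace structure from formula (iv), so justifying (iv) by brace coherence would be circular in this context. The sign-accurate shuffle identity and its differential counterpart are exactly what constitutes the proof of [Sh2, Theorem 3.1]; as written, your proposal is a correct plan with that proof left out.
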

See [Sh2], Theorem 3.1 for signs in (iv). 

\qed

Next, one has

\begin{theorem}
The category $\mathscr{C}=\Cat_\dg(\k)$ of small dg categories, equipped with the twisted product $-\sotimes-$, the unit $\underline{\k}$, the associativity constrains $\alpha$, and with the natural unit isomorphisms is a skew monoidal category
\end{theorem}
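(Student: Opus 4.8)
The plan is to package the data already assembled in Section~\ref{sectiontwtp} and Theorem~\ref{theoremskew} into the full structure of a left skew monoidal category in the sense of [S], and then to verify the five coherence axioms: the pentagon for $\alpha$; the two compatibilities $\lambda_{C\sotimes D}\circ\alpha_{\underline{\k},C,D}=\lambda_C\sotimes\id_D$ and $\alpha_{C,D,\underline{\k}}\circ\rho_{C\sotimes D}=\id_C\sotimes\rho_D$; the mixed unit–associativity identity $(\id_C\sotimes\lambda_D)\circ\alpha_{C,\underline{\k},D}\circ(\rho_C\sotimes\id_D)=\id_{C\sotimes D}$; and $\lambda_{\underline{\k}}\circ\rho_{\underline{\k}}=\id_{\underline{\k}}$. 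Bifunctoriality of $\sotimes$ is in place from Section~\ref{sectiontwtp}, and Theorem~\ref{theoremskew} already furnishes $\alpha_{C,D,E}$ as a dg functor natural in all three arguments, so the associator datum requires nothing further.

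First I would construct the unitors. Since $\underline{\k}$ has a single object with endomorphism algebra $\k$, every morphism of $\underline{\k}$ is a scalar multiple of an identity. By $(R_2)$ and $(R_3)$ every generator $\varepsilon(f;g_1,\dots,g_n)$ with $n\ge 1$ vanishes as soon as one of its arguments lies in $\underline{\k}$: it is either of the form $\varepsilon(\id_x;\dots)$, or it carries a $g_i$ proportional to an identity, both of which are killed by $(R_3)$ after using linearity $(R_2)$. Consequently both $C\sotimes\underline{\k}$ and $\underline{\k}\sotimes C$ collapse onto the ordinary tensor products $C\otimes\underline{\k}$ and $\underline{\k}\otimes C$, which are canonically isomorphic to $C$. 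This yields the natural unit \emph{isomorphisms} $\rho_C\colon C\to C\sotimes\underline{\k}$ and $\lambda_C\colon\underline{\k}\sotimes C\to C$, justifying the ``strict unit'' terminology, and their naturality is immediate.

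Next, the four unit coherence axioms are equalities of dg functors, so by bifunctoriality it suffices to test them on generators; moreover, because one tensor factor is $\underline{\k}$ in each, the explicit clauses (i)--(iv) of Theorem~\ref{theoremskew} degenerate almost entirely. Every $\varepsilon$ touching the unit argument drops out by the vanishing noted above, and $\alpha$ reduces to the transparent object-indexed identifications of clause~(i). The verifications of these four axioms (together with the trivial $\lambda_{\underline{\k}}\circ\rho_{\underline{\k}}=\id$, since $\underline{\k}\sotimes\underline{\k}\cong\underline{\k}$ canonically) are then routine.

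The genuine content is the pentagon. Both legs are dg functors $((C\sotimes D)\sotimes E)\sotimes G\to C\sotimes(D\sotimes(E\sotimes G))$ that are the identity on objects; since a dg functor is determined by its values on the generating morphisms and both legs respect composition (via $(R_4)$) and the differential, it is enough to compare them on generators. The decisive case is the maximally nested generator $\varepsilon(\varepsilon(\varepsilon(f;g_1,\dots,g_k);h_1,\dots,h_\ell);e_1,\dots,e_N)$, the remaining generators being handled by the simpler clauses (i)--(iii) and by naturality. On this generator I would expand each leg by applying the master formula \eqref{eqassoc4bis} twice, in the two orders prescribed by the two paths of the pentagon; each expansion produces a sum of $\varepsilon$'s indexed by nested ordered chains $1\le i_1\le j_1\le\cdots\le N$, with the $g$'s filled by substrings of the $h$'s and those in turn by substrings of the $e$'s, and one checks that the two iterated sums agree after a reindexing of the index sets. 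I expect this to be the main obstacle: it is precisely the associativity of the nested substitution operation encoded by $\varepsilon$, the same combinatorics that underlies the associativity of the braces operad $\Br$ (cf. the discussion preceding Theorem~\ref{theoremskew}). The index-set bijection itself is a straightforward shuffle-compatibility statement; the only delicate point is tracking the Koszul signs through both orders of iteration and matching them term by term.
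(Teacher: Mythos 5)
The paper does not actually prove this statement: its entire ``proof'' is the citation ``See [Sh2], Theorem 3.7,'' so there is no in-paper argument to compare yours against. What you have written is, in outline, a reconstruction of the proof that the paper outsources to [Sh2]. Your outline is essentially sound. The unitors arise exactly as you say: by $(R_2)$ and $(R_3)$ every generator $\varepsilon(f;g_1,\dots,g_n)$ with $n\ge 1$ vanishes as soon as $f$ or some $g_i$ lies in $\underline{\k}$, so $C\sotimes\underline{\k}$ and $\underline{\k}\sotimes C$ collapse to the ordinary tensor products and hence to $C$; this is precisely what the paper means by calling $\underline{\k}$ a \emph{strict} unit, and it is why the coherence discussion can invoke [Sh2, Prop.~3.5] (MacLane-style coherence when the unit maps are isomorphisms). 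The five axioms you list are the correct ones from [S], and the reduction of each axiom to a check on generating morphisms --- using that both legs are dg functors, $(R_4)$ and linearity to reduce the first argument of an $\varepsilon$ to a generator of the inner category --- is a legitimate mechanism.

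The caveat is that the one step carrying all the mathematical content is asserted rather than proven. The pentagon evaluated on the triple-nested generator $\varepsilon(\varepsilon(\varepsilon(f;g_1,\dots,g_k);h_1,\dots,h_\ell);e_1,\dots,e_N)$ requires expanding both legs by two applications of \eqref{eqassoc4bis} (interleaved, for the leg through $\alpha_{C,D,E}\sotimes\id$ and $\id\sotimes\alpha_{D,E,G}$, with clauses (ii)--(iii)) and matching the resulting nested sums \emph{with signs}; this is exactly the brace-substitution associativity underlying \eqref{eqbrace3}, and it is true, but calling it ``a straightforward shuffle-compatibility statement'' delegates the only nontrivial point of the theorem. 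This is the computation on which the cited proof in [Sh2] spends its effort, and a referee of a self-contained write-up would demand it (including the mixed generators such as $\varepsilon(\varepsilon(f\star\id_Y;h_1,\dots,h_n);e_1,\dots,e_N)$, where naturality alone does not reduce you to the fully nested case). So: right strategy, correct skeleton, but as a proof it is incomplete precisely where the paper's reference does the work.
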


See [Sh2], Theorem 3.7.

\qed

We do not reproduce here the axioms of a (closed) skew-monoidal category, referring the reader to [S] or [Sh2], Sect. 3.2.
A closed skew-monoidal category is a skew-monoidal category with internal hom, right adjoint to the tensor product. In particular, $(\Cat_\dg(\k),\sotimes,\underline{\k})$ is closed skew-monoidal. 

Note that one can define a monoid and a module over it in any skew-monoidal category, see \eqref{skewmonoid}.

One also has:
\begin{lemma}\label{lemmatwcomp}
Let $\mathscr{C}$ be a closed skew-monoidal category, with product $\otimes$ and internal hom $[-,-]$.
Then for any three objects $A,B,C\in\mathscr{C}$ one has a map
$$
M\colon [B,C]\otimes [A,B]\to [A,C]
$$
which is associative in the sense that for 4 objects $A,B,C,D\in\mathscr{C}$ the diagram
\begin{equation}
\xymatrix{
([C,D]\otimes [B,C])\otimes [A,B]\ar[rr]^{\alpha}\ar[d]_{M\otimes\id}&&[C,D]\otimes ([B,C]\otimes [A,B])\ar[d]^{\id\otimes M}\\
[B,D]\otimes [A,B]\ar[r]^{M}&[A,D]&[C,D]\otimes [A,C]\ar[l]_{M}
}
\end{equation}
and fulfils similar unit commutative diagrams [Sh2], Prop. 3.6.

In particular, for any $A\in\mathscr{C}$, $[A,A]$ is a monoid. 
\end{lemma}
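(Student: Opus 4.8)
The plan is to construct $M$ by transposition under the closed structure and then reduce associativity to the pentagon axiom for the skew-associator $\alpha$. Write $\ev_{X,Y}\colon [X,Y]\otimes X\to Y$ for the counit (evaluation) of the adjunction $?\otimes X\dashv[X,?]$. I define $M\colon [B,C]\otimes[A,B]\to[A,C]$ as the transpose of the composite
$$
([B,C]\otimes[A,B])\otimes A\xrightarrow{\ \alpha\ }[B,C]\otimes([A,B]\otimes A)\xrightarrow{\id\otimes\ev_{A,B}}[B,C]\otimes B\xrightarrow{\ev_{B,C}}C,
$$
which is legitimate precisely because $\alpha$ points in the admissible direction $(X\otimes Y)\otimes Z\to X\otimes(Y\otimes Z)$. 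By construction $M$ satisfies the defining relation
$$
\ev_{A,C}\circ(M\otimes\id_A)=\ev_{B,C}\circ(\id\otimes\ev_{A,B})\circ\alpha_{[B,C],[A,B],A},
$$
and I record the analogous relation for every triple of objects; this single identity is the only property of $M$ I will use.

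To prove associativity, I transpose the whole square across the adjunction $\mathscr{C}(X,[A,D])\simeq\mathscr{C}(X\otimes A,D)$ and show that the two resulting maps $\bigl(([C,D]\otimes[B,C])\otimes[A,B]\bigr)\otimes A\to D$ coincide. Abbreviate $H_1=[C,D]$, $H_2=[B,C]$, $H_3=[A,B]$, and let $e_1,e_2,e_3$ denote $\ev_{C,D},\ev_{B,C},\ev_{A,B}$. First I expand the left-hand composite $M\circ(M\otimes\id)$: I apply the defining relation for the outer $M$, then naturality of $\alpha$ in its first variable, then the defining relation for the inner $M$, then naturality of $\alpha$ in its third variable. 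After collecting terms this puts the left-hand side into the normal form
$$
e_1\circ\bigl(\id_{H_1}\otimes(e_2\circ(\id_{H_2}\otimes e_3))\bigr)\circ\alpha_{H_1,H_2,H_3\otimes A}\circ\alpha_{H_1\otimes H_2,H_3,A}.
$$
Running the same bookkeeping on the right-hand composite $M\circ(\id\otimes M)\circ\alpha$ — now invoking naturality of $\alpha$ in its second variable — produces the identical prefix $e_1\circ(\id_{H_1}\otimes(e_2\circ(\id_{H_2}\otimes e_3)))$, followed instead by
$$
(\id_{H_1}\otimes\alpha_{H_2,H_3,A})\circ\alpha_{H_1,H_2\otimes H_3,A}\circ(\alpha_{H_1,H_2,H_3}\otimes\id_A).
$$

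It then remains to identify the two tails of associators, and this is exactly the content of the pentagon axiom for $\alpha$ applied to the quadruple $(H_1,H_2,H_3,A)$; no further input is needed and, crucially, no associator has to be inverted. The unit diagrams are handled in the same spirit: transpose each one and reduce to the left- and right-unit constraints of the skew-monoidal structure, the care here being that in the skew setting these constraints go in fixed directions and need not be invertible. Finally, setting $A=B=C=D$ makes $M\colon[A,A]\otimes[A,A]\to[A,A]$ an associative multiplication whose unit is supplied by the unit axioms, so $[A,A]$ is a monoid. The only real subtlety is the bookkeeping: one must insert each instance of $\ev$ and of $\alpha$-naturality in the correct variable and respect the fixed direction of the non-invertible $\alpha$ throughout. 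Once both composites are driven to the common prefix, however, the entire statement collapses onto the pentagon, so that is where I expect the argument to be both most delicate to set up and, ultimately, automatic.
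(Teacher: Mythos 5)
Your construction of $M$ is exactly the paper's: both define $M$ as the transpose, under the adjunction $?\otimes A\dashv[A,?]$, of the composite $\ev\circ(\id\otimes\ev)\circ\alpha$, using that $\alpha$ points in the admissible direction. The paper then simply cites [Sh2, Prop.~3.6] for associativity and the unit diagrams, whereas you carry out that verification; your reduction — transposing the square, applying the defining relation of $M$ together with naturality of $\alpha$ in the first/third (resp.\ second) variables, and identifying the two associator tails $\alpha_{H_1,H_2,H_3\otimes A}\circ\alpha_{H_1\otimes H_2,H_3,A}$ and $(\id\otimes\alpha)\circ\alpha\circ(\alpha\otimes\id)$ via the pentagon, which in the skew setting indeed requires no inverted associators — is correct and is the standard argument behind the cited result.
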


\begin{proof}
We recall the construction of $M$, and refer the reader to [Sh2], Prop. 3.6 for the proof of all statements. 

Assume \eqref{adjj} holds, define $M_{A,B,C}\colon [B,C]\otimes [A,B]\to [A,C]$ as follows. First of all, there is a morphism
$$
e\colon [A,B]\otimes A\to B
$$
adjoint to $\id\colon [A,B]\to [A,B]$. One has the composition
$$
([B,C]\otimes [A,B])\otimes A\xrightarrow{\alpha}[B,C]\otimes ([A,B]\otimes A)\xrightarrow{\id\otimes e}[B,C]\otimes B\xrightarrow{e} C
$$
whose right adjoint gives $M$.
\end{proof}

\begin{remark}\label{remskewmon}{\rm
It follows from the above construction that the monoid structure on $[A,A]$ uses the associator $\alpha$. 
}
\end{remark}

\begin{lemma}\label{lemmatwfinal}
Let $\mathscr{C}$ be a closed skew-monoidal category, $B$ a monoid in $\mathscr{C}$, $A$ a module over $B$, see
\eqref{skewmonoid1}, \eqref{skewmonoid2}. Then under the ajunction in $\mathscr{C}$
$$
\mathscr{C}(B\otimes A, A)\simeq \mathscr{C}(B,[A,A])
$$
the maps defining monoid actions are in 1-to-1 correspondence with the maps of monoids.
\end{lemma}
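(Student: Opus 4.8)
The plan is to exploit the fact that the closed structure \eqref{adjj} already gives a bijection of the \emph{full} hom-sets $\mathscr{C}(B\otimes A,A)\simeq\mathscr{C}(B,[A,A])$, so that it suffices to check that this bijection restricts to a bijection between the subset of those $\ell\colon B\otimes A\to A$ that equip $A$ with a $B$-module structure and the subset of those $\phi\colon B\to[A,A]$ that are monoid maps. Following the equaliser scheme of the Introduction (cf. \eqref{equal1}, \eqref{equal2}), I would realise each subset as the equaliser of a parallel pair of maps of sets and then verify that the adjunction intertwines the two pairs. Writing $e\colon[A,A]\otimes A\to A$ for the evaluation (the counit adjoint to $\id_{[A,A]}$), the action associated with $\phi$ is $\ell=e\circ(\phi\otimes\id_A)$. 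On the module side the relevant pair is $\ell\mapsto\ell\circ(m\otimes\id_A)$ versus $\ell\mapsto\ell\circ(\id_B\otimes\ell)\circ\alpha_{B,B,A}$ (these agree exactly when \eqref{skewmonoid2} holds), and on the monoid side it is $\phi\mapsto\phi\circ m$ versus $\phi\mapsto M\circ(\phi\otimes\phi)$, with $M=M_{A,A,A}$ the multiplication of Lemma \ref{lemmatwcomp}.

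First I would match the two ``strict-associativity'' legs. Taking transposes, the adjoint of $\phi\circ m$ is $e\circ\big((\phi\circ m)\otimes\id_A\big)=\ell\circ(m\otimes\id_A)$, which is just naturality of the adjunction under precomposition with $m$. The crux is the other leg. By Lemma \ref{lemmatwcomp}, $M$ is the transpose of $e\circ(\id\otimes e)\circ\alpha_{[A,A],[A,A],A}$, i.e. $e\circ(M\otimes\id_A)=e\circ(\id\otimes e)\circ\alpha_{[A,A],[A,A],A}$. Hence the transpose of $M\circ(\phi\otimes\phi)$ is
\[
e\circ(\id\otimes e)\circ\alpha_{[A,A],[A,A],A}\circ\big((\phi\otimes\phi)\otimes\id_A\big).
\]
Invoking naturality of $\alpha$ in all three arguments (Theorem \ref{theoremskew}), this equals $e\circ(\id\otimes e)\circ\big(\phi\otimes(\phi\otimes\id_A)\big)\circ\alpha_{B,B,A}$, and functoriality of $\otimes$ together with $\ell=e\circ(\phi\otimes\id_A)$ collapse the front to $\ell\circ(\id_B\otimes\ell)$. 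Thus the transpose of $M\circ(\phi\otimes\phi)$ is precisely $\ell\circ(\id_B\otimes\ell)\circ\alpha_{B,B,A}$, matching the second leg on the module side. The adjunction therefore identifies the two parallel pairs, hence their equalisers, so $\phi$ is multiplicative iff $\ell$ satisfies \eqref{skewmonoid2}.

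For the unit constraints I would note that the unit $\eta_{[A,A]}\colon\underline{\k}\to[A,A]$ of the monoid $[A,A]$ is, by the unit clause of Lemma \ref{lemmatwcomp}, the transpose of the left unit constraint $\underline{\k}\otimes A\to A$; transposing the monoid-unit compatibility $\phi\circ\eta_B=\eta_{[A,A]}$ then yields exactly the module unit axiom $\ell\circ(\eta_B\otimes\id_A)=(\text{left unitor})$, i.e. the condition $\ell(1_B\otimes a)=a$ of item (iii). Combining the multiplicative and unit comparisons, the adjunction \eqref{adjj} restricts to the asserted $1$-to-$1$ correspondence.

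The hard part will be the multiplicativity step, and in particular making sure that no \emph{inverse} associator is ever required — an issue invisible in the classical monoidal case but central here, where $\alpha$ is a genuine, non-invertible one-sided map. This works out precisely because the multiplication $M$ of Lemma \ref{lemmatwcomp} is built from $\alpha$ in the very direction $(X\otimes Y)\otimes A\to X\otimes(Y\otimes A)$ that also governs the module axiom \eqref{skewmonoid2}, so that only the \emph{naturality} of $\alpha$, and never its inversion, is needed. A subsidiary point demanding care is the correct identification of the monoid unit of $[A,A]$ and of the unitors in the skew setting, so that the unit clauses on the two sides really are transposes of one another.
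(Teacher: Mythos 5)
Your proposal is correct and follows exactly the route the paper takes: realise both sides as equalisers of parallel pairs (as in \eqref{equal1}, \eqref{equal2}) and check that the adjunction \eqref{adjj} intertwines them, taking care that the multiplication $M$ on $[A,A]$ from Lemma \ref{lemmatwcomp} involves the one-sided associator (cf.\ Remark \ref{remskewmon}). In fact the paper declares this verification ``straightforward'' and leaves it to the reader, so your write-up — in particular the naturality-of-$\alpha$ computation showing the transpose of $M\circ(\phi\otimes\phi)$ is $\ell\circ(\id_B\otimes\ell)\circ\alpha_{B,B,A}$, with no inverse associator ever needed — supplies precisely the omitted details.
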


\begin{proof}
The proof follows the same line as we discussed for the strict monoidal case in the Introduction, see \eqref{equal1}, \eqref{equal2}. We have to be more careful in the presence of (one-sided) associator in showing that the two maps in one equaliser correspond via the adjunction to the corresponding maps in another equaliser. It is straightforward, anyway, and the details are left to the reader. Note that by Remark \ref{remskewmon} (the counterpart of) \eqref{equal2} also depend on the associator. 
\end{proof}

\begin{coroll}\label{corrr}
Let $B$ be a dg algebra over $\k$ which is a $\sotimes$-monoid, $A$ a dg algebra such that there is a dg algebra map $\ell\colon B\sotimes A\to A$ such that $\ell(1_B\sotimes a)=a$ for any $a\in A$. Then $\ell$ is a monoid action if and only if the adjoint map $B\to [A,A](\id,\id)$ is a monoid map.
\end{coroll}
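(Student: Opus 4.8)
The plan is to deduce the Corollary directly from Lemma \ref{lemmatwfinal} applied to the closed skew-monoidal category $\mathscr{C}=(\Cat_\dg(\k),\sotimes,\underline{\k})$, with the dg algebras $B$ and $A$ regarded as single-object dg categories $\underline{B},\underline{A}$ (their unique endomorphism complexes being $B$ and $A$ respectively). First I would invoke the adjunction \eqref{adjj} in the form $\Cat_\dg(\underline{B}\sotimes\underline{A},\underline{A})\simeq \Cat_\dg(\underline{B},[\underline{A},\underline{A}])$, so that the action map $\ell\colon B\sotimes A\to A$ corresponds to an adjoint dg functor $\underline{B}\to[\underline{A},\underline{A}]$.

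The first substantive point is to pin down where this adjoint functor lands. Since $\underline{B}$ has a single object, the functor sends it to some object of $[\underline{A},\underline{A}]$, that is, to a dg functor $\phi\colon\underline{A}\to\underline{A}$; I would check that the hypothesis $\ell(1_B\sotimes a)=a$ translates, through the unit axioms of $\sotimes$, into $\phi=\id$. Consequently the adjoint is a based functor taking values in the full subcategory of $[\underline{A},\underline{A}]$ on the object $\id$, whose endomorphism dg algebra is exactly $[\underline{A},\underline{A}](\id,\id)=\Hoch^\udot(\underline{A},A)=\Hoch^\udot(A)$ by the definition of the coherent Hom recalled in Section \ref{sectioncohhom}. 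This identifies the adjoint with a pointed map of complexes $B\to\Hoch^\udot(A)$.

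Next I would apply Lemma \ref{lemmatwfinal}: under the adjunction the maps $\ell$ endowing $A$ with a $B$-module structure (those satisfying the compatibility diagram \eqref{skewmonoid2}) correspond bijectively to the maps of monoids $\underline{B}\to[\underline{A},\underline{A}]$. Combining this with the previous step, $\ell$ is a monoid action if and only if the induced map $B\to[\underline{A},\underline{A}](\id,\id)=\Hoch^\udot(A)$ is a map of monoids, where the target carries the monoid structure obtained by restricting the composition $M$ of Lemma \ref{lemmatwcomp} to the object $\id$ (equivalently, the brace structure of Lemma \ref{lemmaintro1}). This is precisely the assertion of the Corollary.

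The main obstacle I anticipate is the bookkeeping that makes the passage from the full internal Hom $[\underline{A},\underline{A}]$ to the single endomorphism algebra $[\underline{A},\underline{A}](\id,\id)$ legitimate, namely verifying that the equalizer comparison underlying Lemma \ref{lemmatwfinal} (the skew analogues of \eqref{equal1}, \eqref{equal2}) only ever involves morphisms based at $\id$, so that both the module-structure condition and the monoid-map condition are genuinely conditions on $\Hoch^\udot(A)$. By Remark \ref{remskewmon} the restricted monoid structure depends on the associator $\alpha$, and here I would lean on the observation made before Theorem \ref{theoremskew} that the formula for $\alpha$ in Theorem \ref{theoremskew}(iv) matches the brace relation, so that this restricted structure is exactly the brace multiplication on $\Hoch^\udot(A)$ and the equivalence obtained is the expected one.
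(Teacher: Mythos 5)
Your proposal is correct and follows essentially the same route as the paper: the paper states Corollary \ref{corrr} as an immediate consequence of Lemma \ref{lemmatwfinal} (offering no further argument), and your write-up simply makes that deduction explicit --- viewing $B$ and $A$ as one-object dg categories, using the unit hypothesis $\ell(1_B\sotimes a)=a$ to pin the adjoint functor's value on objects to $\id$, and identifying the resulting endomorphism monoid with $\Hoch^\udot(A)$. The details you supply (including the point that the monoid structure on $[A,A](\id,\id)$ is the restriction of $M$ from Lemma \ref{lemmatwcomp} and matches the brace structure of Lemma \ref{lemmaintro1}) are exactly the bookkeeping the paper leaves implicit.
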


\qed
\comment
We will need the following generalisation of Corollary \ref{corrr} to the case of dg categories.

Let $B,A$ be dg categories, $B$ an $\sotimes$-monoid acting on $A$, $\ell\colon B\sotimes A\to A$. For each $b\in B$ denote by $\theta_b\colon A\to A$ the functor $a\mapsto \ell(\id_b\sotimes f)$, $f\in \Mor(A)$. Define $[A,A]_\ell$ as dg category whose objects are $b\in B$ and whose morphisms $[A,A]_\ell(b_1,b_2)=[A,A](\theta_{b_1},\theta_{b_2})$. Then $[A,A]_\ell\subset [A,A]$ is a full $\sotimes$-submonoid. 

\begin{coroll}\label{corrr2}
In the notations as above, the action $\ell$ gives rise to a functor $\omega_\ell\colon B\to [A,A]_\ell$ of $\sotimes$-monoids, 
and $\ell\rightsquigarrow\eta_\ell$ is a 1-to-1 correspondence between actions of $B$ on $A$ and functors of $\sotimes$-monoids $B\to [A,A]$.
\end{coroll}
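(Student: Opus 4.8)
The plan is to read Corollary \ref{corrr2} as the translation of the abstract Lemma \ref{lemmatwfinal} into the closed skew-monoidal category $\mathscr{C}=(\Cat_\dg(\k),\sotimes,\underline{\k})$, the only content beyond Corollary \ref{corrr} being the bookkeeping at the level of objects, of which there are now genuinely many. First I would take the adjunction \eqref{adjj}, $\Hom(B\sotimes A,A)\simeq\Hom(B,[A,A])$, and let $\bar\ell\colon B\to[A,A]$ be the dg functor adjoint to the action $\ell\colon B\sotimes A\to A$. Everything then reduces to identifying $\bar\ell$ and invoking Lemma \ref{lemmatwfinal}.

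The first task is to compute $\bar\ell$. Running the adjunction through the evaluation counit $e\colon[A,A]\sotimes A\to A$ of Lemma \ref{lemmatwcomp}, $\bar\ell$ sends an object $b\in\Ob B$ to the dg functor $A\to A$ whose value on $f\in\Mor A$ is $\ell(\id_b\sotimes f)$, that is, to $\theta_b$; and it sends a morphism $\phi\in B(b_1,b_2)$ to a coherent natural transformation $\theta_{b_1}\Rightarrow\theta_{b_2}$, i.e. to an element of $[A,A](\theta_{b_1},\theta_{b_2})=\Hoch^\udot(A,M_{\theta_{b_1},\theta_{b_2}})=[A,A]_\ell(b_1,b_2)$. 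Since $\bar\ell$ carries objects into $\{\theta_b\}_{b\in\Ob B}=\Ob[A,A]_\ell$ and $[A,A]_\ell$ is the full subcategory on these objects, $\bar\ell$ corestricts to a dg functor $\omega_\ell\colon B\to[A,A]_\ell$; this is the functor of the first assertion, pending the verification of monoidality.

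Next I would confirm the standing claim that $[A,A]_\ell$ is a $\sotimes$-submonoid. For the unit, axiom (iii) gives $\theta_{\mathbf 1}=\id_A$, so the monoid unit $\id_A$ of $[A,A]$ already lies in $[A,A]_\ell$. For the multiplication, recall that the composition $M$ of Lemma \ref{lemmatwcomp} sends $(\theta_{b_1},\theta_{b_2})$ to $\theta_{b_1}\circ\theta_{b_2}$ on objects; feeding the generating morphism $(\id_{b_1}\star\id_{b_2})\star f$, with $f\in\Mor A$, through the module diagram \eqref{skewmonoid2} and using $\alpha\big((\id_{b_1}\star\id_{b_2})\star f\big)=\id_{b_1}\star(\id_{b_2}\star f)$ from Theorem \ref{theoremskew}(i), I obtain $\theta_{b_1}\circ\theta_{b_2}=\theta_{m(b_1,b_2)}$. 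Hence the object set, and so the full subcategory $[A,A]_\ell$, is closed under $M$, and $[A,A]_\ell\hookrightarrow[A,A]$ is a submonoid. That $\omega_\ell$ intertwines the two monoid structures is then exactly the content of Lemma \ref{lemmatwfinal} for $\mathscr{C}$, transported along this inclusion.

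For the bijection, Lemma \ref{lemmatwfinal} asserts that under \eqref{adjj} the actions $\ell$ (those $B\sotimes A\to A$ obeying \eqref{skewmonoid2} and the unit law) correspond precisely to the maps of $\sotimes$-monoids $B\to[A,A]$; moreover any such monoid map sends $b$ to $\theta_b$, hence automatically factors through the full submonoid $[A,A]_\ell$, which reconciles the two targets $[A,A]$ and $[A,A]_\ell$ named in the statement. Thus $\ell\mapsto\omega_\ell$ is the restriction of the adjunction bijection to these distinguished subsets and is itself a bijection, proving the second assertion. The one delicate point, which I would either cite from Lemma \ref{lemmatwfinal} or retrace by hand, is the role of the \emph{one-sided} associator: both the module axiom \eqref{skewmonoid2} and the monoid-map condition for $\bar\ell$ involve $\alpha$ (the latter through $M$ in Lemma \ref{lemmatwcomp}), and one must check that they match under \eqref{adjj} even though $\alpha$ is neither invertible nor a quasi-equivalence. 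The Hochschild-complex-valued higher components of the morphisms of $[A,A]$ make this the only genuinely laborious part of the argument.
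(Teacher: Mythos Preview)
Your proposal is correct and follows exactly the line the paper intends: the paper records Corollary \ref{corrr2} with a bare \qed, treating it as the straightforward extension of Corollary \ref{corrr} and Lemma \ref{lemmatwfinal} from the one-object case to general small dg categories. You have simply written out the object-level bookkeeping (identifying $\bar\ell(b)=\theta_b$, verifying that $[A,A]_\ell$ is closed under $M$ via Theorem \ref{theoremskew}(i) and diagram \eqref{skewmonoid2}, and then invoking Lemma \ref{lemmatwfinal}) that the paper leaves implicit.
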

\qed

\endcomment

\section{\sc The brace operad and a proof of Theorem \ref{theorem1}}\label{sectionbrace}
Recall the Getzler-Jones $B_\infty$ operad: a complex $X\in C^\udot(\k)$ is a $B_\infty$ algebra if the cofree coalgebra 
$$
T^\vee_+(X[1])=X[1]\oplus X^{\otimes 2}\oplus X[1]^{\otimes 3}\oplus\dots
$$
is a dg bialgebra (whose underlying coalgebra $T^\vee(X[1])$ is the cofree one). This definition gives rise to a symmetric dg operad called $B_\infty$. Thus to define a $B_\infty$ structure on $X$ one has to define the Taylor components of the differential
$$
p_n\colon X^{\otimes n}\to X[1-n],\  \ n\ge 1
$$
and of the product 
$$
q_{m,n}\colon X^{\otimes m}\otimes X^{\otimes n}\to X[1-m-n]
$$
which are subject to the relations which express the dg bialgebra axioms for $T^\vee_+(X[1])$. 

This dg operad is rather mysterious, in particular, its cohomology is unknown. On the other hand, its quotient by the relations $q_{m,n}=0$ for $m\ge 2$, called the {\it brace operad} $\Br$, is known to be weakly equivalent to the dg operad $C_\ldot(E_2,\k)$. In all known examples of $B_\infty$ algebras the action in fact descends to an action of $\Br$. One often also has $p_n=0$ for $n\ge 3$ (although it is not required). 

A list of examples of $\Br$-algebras includes: (1) the Hochschild cochain complex $\Hoch^\udot(A)$ of a dg algebra $A$ (or an $A_\infty$ algebra, or a small dg category) [GJ], (2) the cobar-complex of the underlying coalgebra of a dg bialgebra [Ka], [T2],  (3) 
cochain complex of 1-reduced simplicial set [Ba].

One traditionally denotes the operation $p_2(x\otimes y)=x\cup y$ and $q_{2,n}(x\otimes y_1\otimes\dots\otimes y_n)=\{x;y_1,\dots,y_n\}$. 

In this notations, the following identities are equivalent to a $\Br$-algebra structure on $V\in C^\udot(\k)$ with $p_n=0$ for $n\ge 3$:

\begin{equation}\label{eqbrace1}
x\cup(y\cup z)=(x\cup y)\cup z
\end{equation}

\begin{equation}\label{eqbrace2}
(x_1\cup x_2)\{y_1,\dots,y_n\}=\sum_{k=0}^n(-1)^{|x_2|\sum_{j=1}^k|y_j|}x_1\{y_1,\dots,y_k\}\cup x_2\{y_{k+1},\dots,y_n\}
\end{equation}

\begin{equation}\label{eqbrace3}
\begin{aligned}
\ &x\{y_1,\dots,y_m\}\{z_1,\dots,z_n\}=\sum_{0\le i_1\le j_1\le i_2\le j_2\le\dots\le i_m\le j_m \le n}\\
&(-1)^\epsilon x\{z_1,\dots,z_{i_1}, y_1\{z_{i_1+1},\dots,z_{j_1}\}, z_{j_1+1},\dots, z_{i_m}, y_m\{z_{i_m+1},\dots,z_{j_m}\}, 
z_{j_m+1},\dots,z_n\}
\end{aligned}
\end{equation}
where $\epsilon=\sum_{p=1}^m|y_p|\sum_{q=1}^{i_p}|z_q|$.

\begin{equation}\label{eqbrace4}
[d,x\{y_1,\dots,y_n\}]=\pm y_1\cup x\{y_2,\dots,y_n\}+\sum\pm x\{y_1,\dots, y_i\cup y_{i+1},\dots,y_n\}\pm x\{y_1,\dots,y_{n-1}\cup y_n
\end{equation}
with the signs as in \eqref{eqd1}.

The brace algebra with $p_n=0$ for $n\ge 3$ is called {\it unital} if there is an element $e\in V^0$ such that $e\cup x=x\cup e=x$ for any $x\in V$, and if $\{x;y_1,\dots,y_n\}=0$ when $x$ or one of $y_i$ is equal to $e$. 

\subsection{\sc A proof of Lemma \ref{lemmaintro1}}\label{prooflemma1}
Let $B$ be a dg category with a single object equipped with a dg algebra map $m\colon B\sotimes B\to B$ having the properties as in the statement of Lemma \ref{lemmaintro1}. Let $1_B$ be the unit of dg algebra $B$. By assumption, $1_B$ is also a unit for $m$. Therefore, $m(\id_B\sotimes b)=m(b\sotimes\id_B)=b$, for any $b\in B$. We have to define the product $b_1\cup b_2$ and the brace $\{b; b_1,\dots,b_n\}$. Define $b_1\cup b_2$ as the product $b_1\cdot b_2$ in dg algebra $B$, and define
$$
\{b; b_1,\dots,b_n\}:=m(\varepsilon(b; b_1,\dots,b_n))
$$
Then \eqref{eqbrace4} follows from \eqref{eqd1} and the remark on units made above, \eqref{eqbrace1} follows from the associativity of the product in $B$, \eqref{eqbrace2} follows from \eqref{eqsuper}, and finally \eqref{eqbrace3} follows from the associator formula \eqref{eqassoc4bis}. 

The unitality follows from our assumption on the units, and from $(R_3)$ in Section \ref{sectiontwtp}. 

Lemma \ref{lemmaintro1} is proven.

\qed

\subsection{\sc The operad $\mathcal{O}_0$ and comparison with [DTT]}
Define a colored operad $\mathcal{O}_0$ with two colors in $C^\udot(\k)$ requiring that $(B,A)$ is an algebra over $\mathcal{O}$ iff both $B,A$ are dg algebras over $\k$, $B$ is an $\sotimes$-monoid acting on $A$. denote, as above, the colors by $c$ and $a$. We have seen in Lemma \ref{lemmaintro1} that the restriction of $\mathcal{O}_0$ on the color $c$ is the brace operad $\Br$. One can similarly describe the entire operad $\mathcal{O}_0$. 

Namely, one can easily show that the colored dg operad $\mathcal{O}_0$ is isomorphic to the {\it normalised dg condensation} $|\sO|_\norm$ of the simplicial Swiss Cheese operad $\sO$ introduced in [DTT], Section 3. This identification is straightforward, so we leave details to the reader. (In particular, the restriction of $|\sO|_\norm$ to color $c$ is the brace operad, as is shown in detail in [BBM]).

Then Theorem 2.1 of [DTT], along with arguments similar to the ones employed in [BBM], gives that 
the colored dg operad $\mathcal{O}_0$ is weakly equivalent to the chain Swiss Cheese operad $\scc_1$.

\subsection{\sc A proof of Theorem \ref{theorem1}}
Let $B$ be $\sotimes$-monoid, and $A$ a dg algebra. By
Corollary \ref{corrr}, any twisted action of $B$ on $A$ gives a map of $\sotimes$-monoids $B\to [A,A](\id,\id)=\Hoch^\udot(A)$, and vice versa. Thus the category $\Act(\mathcal{O}_0)_A$ becomes equivalent to the comma-category of $\sotimes$-monoids over $[A,A](\id,\id)$. In particular, the action of $\Hoch^\udot(A)$ on $A$ gives the identity map of $\sotimes$-monoids $\id: [A,A](\id,\id)\to [A,A](\id,\id)$, and thus is a final object in $\Act(\mathcal{O}_0)_A$.

\qed

\bigskip

\noindent{\small
 {\sc Euler International Mathematical Institute\\
10 Pesochnaya Embankment, St. Petersburg, 197376 Russia }}

\bigskip

\noindent{{\it e-mail}: {\tt bataninmichael@gmail.com, shoikhet@pdmi.ras.ru}}

\end{document}